\newcommand{\assign}{:=}
\newcommand{\backassign}{=:}
\newcommand{\cdummy}{\cdot}
\newcommand{\tmdummy}{$\mbox{}$}
\newcommand{\tmop}[1]{\ensuremath{\operatorname{1}}}
\newcounter{item}
\numberwithin{item}{section}
\newenvironment{enumerateroman}{\begin{enumerate}[i.] }{\end{enumerate}}
\newenvironment{proof}{\noindent\textbf{Proof\ }}{\hspace*{\fill}$\Box$\medskip}
\newtheorem{corollary}[item]{Corollary}
\newtheorem{definition}[item]{Definition}
\newtheorem{lemma}[item]{Lemma}
\newtheorem{proposition}[item]{Proposition}
{\theorembodyfont{\rmfamily}\newtheorem{remark}[item]{Remark}}
\newtheorem{theorem}[item]{Theorem}
\numberwithin{equation}{section}
\begin{document}

\title{Local and global well-posedness of 2d periodic multiplicative
stochastic NLS}

\author{Immanuel Zachhuber}

\maketitle

\begin{abstract}
  We use Strichartz estimates with rough potentials like the spatial white
  noise on the 2 \ dimensional torus to prove global well-posedness of the multiplicative stochastic NLS with general
  integer powers in both the energy and strong regime together with controls over the growth of the norms of the solutions.
\end{abstract}

\section{Introduction}\label{sec:introstrich}

This work is devoted to proving Strichartz estimates leading to low-regularity
local-in-time and high regularity global-in-time well-posedness of defocussing
NLS (nonlinear Schr{\"o}dinger equations) with very rough potentials $\xi$, so
\begin{equation}\begin{aligned}
	i \partial_t u - \Delta u & = u \cdummy \xi - u |u|^{2 n}  \text{ on }
	\mathbb{T}^2 \\
	u (0) & = u_0,  \label{eqn:intronls}
	\end{aligned}
\end{equation}
where $\mathbb{T}^2 =\mathbb{R}^2 /\mathbb{Z}^2$ is the $2$ dimensional torus
and general integer power nonlinearity $n \in \mathbb{N}$. Our chief interest
is the case where $\xi$ is \textit{spatial white noise}, which is a
distribution whose regularity is only $\mathcal{C}^{- 1 - \varepsilon}$ for
$\varepsilon > 0$, see \eqref{def:white} for its precise definition and the
appendix for a reminder of the definition of the \textit{H{\"o}lder-Besov
spaces} $\mathcal{C}^{\alpha}$.

In the case of the white noise potential there turns out to be a peculiarity
in the form of \textit{renormalisation}, which means that in order to make
sense of \eqref{eqn:intronls} one is required to shift by an infinite
correction term, formally ``$\infty \cdummy u$''. This can be interpreted as
an infinite phase shift in the PDE, since $e^{i t c} u$ solves the equation
with an additional mass $c.$ This kind of renormalisation is now well known in
the theory of singular SPDEs which has seen a rapid growth in recent years
following the introduction of the theory of \textit{Regularity Structures}
by Hairer {\cite{hairer2014theory}}, the theory of \textit{Paracontrolled
Distributions} by Gubinelli, Perkowski, and Imkeller
{\cite{gubinelli2015paracontrolled}} and others.

The approach we follow in this paper is to put the potential $\xi$ into the
definition of the operator, i.e. we try to define the operator
\[ H \text{``$=$''} \Delta + \xi \]
as a self-adjoint and semi-bounded operator on $L^2 (\mathbb{T}^d)$. This was
first done by Allez and Chouk in {\cite{allez_continuous_2015}}, where the
operator together with its domain were constructed in 2d with the white noise
potential--hereafter called the \textit{Anderson Hamiltonian}--using
Paracontrolled Distributions. A similar approach was used in {\cite{GUZ}} to
construct the operator and its domain in 3d with an eye also on solving PDEs
like \eqref{eqn:intronls}. In Section \ref{sec:anderson} we recall the main
ideas of {\cite{GUZ}} since the results are integral to the current work. The
domain of the Anderson Hamiltonian was also constructed by Labb{\'e} using
Regularity Structures {\cite{labbe2019continuous}} with Dirichlet boundary conditions and by Mouzard \cite{Mou} on compact surfaces.

The equation \eqref{eqn:intronls} with white noise potential in 2d was solved,
but not shown to be well-posed, by Debussche and Weber
{\cite{debussche2018schrodinger}} in the cubic case and by Visciglia and
Tzvetkov {\cite{TVa}},{\cite{TVb}} for other powers and on the whole space with
a sub-cubic power in the nonlinearity by Debussche and Martin in
{\cite{debussche2019solution}} which was then generalised in
{\cite{DLTVglobal}} to higher powers. In {\cite{GUZ}} global
well-posedness(GWP) in the cubic case was proved in the domain of the
Anderson Hamiltonian in 2d, whereas in 3d one gets a blow-up alternative when
starting in the domain analogously to the case of classical $\mathcal{H}^2$
solutions in {\cite{cazenave2003semilinear}}, in the more recent paper, global well-posedness was proved for a class of Hartree nonlinearities in 3d \cite{DVJZ25}. Furthermore, in {\cite{GUZ}}
global existence in the energy space in 2d was shown, but not well-posedness.
Achieving GWP for energy solutions to \eqref{eqn:intronls} is one of the
results of this paper, see Theorem \ref{thm:2denergy_intro}.

The (nonlinear) Schr{\"o}dinger equation \eqref{eqn:intronls} with a (random)
potential has certain physical interpretations, see
{\cite{gerard2006nonlinear}} and the references therein. In this paper we
consider the white noise potential but the same results hold for a large class
of potentials, see Remark \ref{rem:genpot}. Some potentials of interest are
actually \textit{critical} in the sense of scaling, like the Dirac Delta in
2d (see the monograph {\cite{albeverio2012solvable}}) or the potential $|
\cdummy |^{- 2}$ treated in {\cite{burq2003strichartz}}. Our method does not
apply in these cases, but in the aforementioned examples the analysis depends
in a crucial way on the structure of the potential.

Stochastic NLS similar to \eqref{eqn:intronls} but with different noises(e.g.
white in time coloured in space) have also been considered, see
{\cite{de1999stochastic}}, {\cite{brzezniak2019martingale}},
{\cite{fan2018global}} to name but a few. Other stochastic dispersive PDEs
which have been studied in recent years include stochastic NLS with additive
space-time noise {\cite{oh2018stochastic}}, {\cite{forlano2018stochastic}} and
nonlinear stochastic wave equations with additive space-time noise in
{\cite{gubinelli2018renormalization}} and
{\cite{gubinelli2018paracontrolled}}. Let us also mention
{\cite{gubinelli2012rough}}, where the theory of \textit{Rough Paths}--the
precursor to both Regularity Structures and Paracontrolled Distributions--is
used to solve the \textit{deterministic} low-regularity KdV equation and
which showcases nicely how tools from singular SPDEs can be applied to
non-stochastic PDE problems.\\

For the sake of completeness, we state that by local well-posedness(LWP) of the SPDE \eqref{eqn:intronls} in the space $X$ means that for every $u_0\in X$ there exists a unique solution $u\in Y\subset C([0,T];X),$ for some suitable space $Y$ with the time of existence $T=T(u_0)$ to the mild formulation of \eqref{eqn:intronls} which is 
\begin{equation}
	u (t) = e^{- i t H} u_0 - i \int^t_0 e^{- i (t - s) H} u | u |^{2 n} (s) d
	s.\label{eqn:intromild}
\end{equation}
and that the map $u_0\mapsto u$ is continuous.\\
We say that the equation \eqref{eqn:intronls} is globally well-posedness(GWP) if for any time $T>0$ (i.e. independently of the initial data) there exists a unique solution to \eqref{eqn:intromild} which depends continuously on the data. These properties are usually obtained by a contraction argument, however we will see that in the energy space this is not directly possible.
We state the main (shortened) results of the paper relating to the
multiplicative stochastic NLS. $H \text{``$=$''} \Delta + \xi - \infty$ is the
Anderson Hamiltonian whose exact definition and properties are recalled in
Section \ref{sec:anderson}.\\
\begin{theorem}
  \label{thm:2dandstr(intro)}[2d Anderson Strichartz Estimates]Let $r
  \geqslant 4$, then we have for any $\delta > 0$
  \begin{equation}
    \|e^{- itH} u\|_{L_{t ; [0, 1]}^r L^r} \lesssim
    \|u\|_{\mathcal{H}^{\frac{(r - 3) (1 + \delta)}{r}}} \label{eqn:introstr}
  \end{equation}
\end{theorem}

\begin{theorem}
  \label{thm:2dlwp_intro}[2d low regularity local well-posedness] Let $n \in
  \mathbb{N},$ then the SPDE
  \begin{align*}
       (i \partial_t - H) u & =  - u |u|^{2 n}  \text{ on } \mathbb{T}^2\\
       u (0) & =  u_0
     \end{align*} 
  is locally well-posed (LWP) in $\mathcal{H}^s$ for $s \in \left( 1 -
  \frac{1}{2n}, 1 \right)$ up to a time $T \sim (1 + \| u_0
  \|_{\mathcal{H}^s})^{- K}$for some $K > 0$ depending on $n$
  polynomially.
\end{theorem}

\begin{theorem}
  \label{thm:2denergy_intro} [2d GWP for energy solutions] Let $n \in
  \mathbb{N},$ then the SPDE
   \begin{align*}
       (i \partial_t - H) u & = - u |u|^{2 n}  \text{on } \mathbb{T}^2\\
       u (0) & = u_0
     \end{align*} 
  is globally well-posed (GWP) in the energy space, $\mathcal{D} \left(
  \sqrt{- H} \right)$, whose definition is recalled in Theorem
  \ref{thm:Gammaemb}.
\end{theorem}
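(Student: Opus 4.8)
The plan is to upgrade the local theory to the regularity of the energy space and then to globalise by means of the conservation laws. Write the equation as $i\partial_t u = Hu - |u|^2 u$; it is (formally) Hamiltonian, with conserved mass $M(u)=\|u\|_{L^2_{\mathbb{T}^2}}^2$ and energy $E(u)=\tfrac12\langle -Hu,u\rangle+\tfrac14\|u\|_{L^4_{\mathbb{T}^2}}^4$. Since $H$ is self-adjoint and bounded above, fix $C>0$ with $-H+C\geqslant 1$ and set $E_C:=E+\tfrac{C}{2}M$; this is again conserved and, by the defocussing sign, $E_C(u)=\tfrac12\|(-H+C)^{1/2}u\|_{L^2_{\mathbb{T}^2}}^2+\tfrac14\|u\|_{L^4_{\mathbb{T}^2}}^4\geqslant\tfrac12\|u\|_{\mathcal{D}(\sqrt{-H})}^2$. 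By the description of the form domain recalled in Section~\ref{sec:2dand} one has $\mathcal{D}(\sqrt{-H})\hookrightarrow\mathcal{H}^{1-\varepsilon}_{\mathbb{T}^2}\hookrightarrow L^4_{\mathbb{T}^2}$ for every $\varepsilon>0$ (the last embedding by Sobolev/Gagliardo--Nirenberg in two dimensions), so $E_C(u_0)$ is finite and bounded in terms of $\|u_0\|_{\mathcal{D}(\sqrt{-H})}$; conservation of $E_C$ then gives the a priori bound $\sup_t\|u(t)\|_{\mathcal{D}(\sqrt{-H})}\lesssim 1$ with an implicit constant depending only on $\|u_0\|_{\mathcal{D}(\sqrt{-H})}$.

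For the local theory I would set up a contraction for Duhamel's formula $u(t)=e^{-itH}u_0+i\int_0^t e^{-i(t-\tau)H}\big(|u|^2u\big)(\tau)\,d\tau$ in a norm controlling both $\|u\|_{C([0,T];\mathcal{D}(\sqrt{-H}))}$ and $\|(-H+C)^{1/2}u\|_{L^r_{t;[0,T]}L^r_{\mathbb{T}^2}}$ for some $r$ slightly larger than $4$, in the spirit of the proof of Theorem~\ref{thm:2dlwp_intro} but now at regularity one instead of $s\in(1/2,1)$. The key structural point is that the functional calculus of $H$ commutes with the group, $(-H+C)^{1/2}e^{-itH}=e^{-itH}(-H+C)^{1/2}$, so that Theorem~\ref{thm:2dandstr(intro)} applied to $(-H+C)^{1/2}u_0$ yields the linear Strichartz bound at the energy level — the $\delta$-loss there, and the gap between $\mathcal{D}(\sqrt{-H})$ and $\mathcal{H}^1$, being harmless since the problem is energy-subcritical ($1>1/2$). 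Dualising reduces matters to estimating $(-H+C)^{1/2}(|u|^2u)$ in an $L^{r'}_tL^{r'}_x$-type norm; since $\mathcal{D}(\sqrt{-H})$ is an $\mathcal{H}^1$-type space up to the paracontrolled correction recalled in Section~\ref{sec:2dand}, this follows from a fractional Leibniz/product estimate, Hölder in time producing a small positive power of $T$, and the embedding $\mathcal{D}(\sqrt{-H})\hookrightarrow L^p_{\mathbb{T}^2}$ ($p<\infty$) for two of the three factors. Contraction then produces existence, uniqueness, Lipschitz dependence on $u_0$, and a blow-up criterion in the $\mathcal{D}(\sqrt{-H})$-norm, with lifespan depending only on $\|u_0\|_{\mathcal{D}(\sqrt{-H})}$. (Uniqueness is in any case inherited from Theorem~\ref{thm:2dlwp_intro} via $\mathcal{D}(\sqrt{-H})\hookrightarrow\mathcal{H}^s$ for $s\in(1/2,1)$.)

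Globalisation is then routine. First one justifies the conservation laws at this regularity by approximating $u_0$ in $\mathcal{D}(\sqrt{-H})$ by data in the operator domain $\mathcal{D}(H)$ — for which $M$ and $E$ are conserved by the classical computation — and passing to the limit using the continuity of the local flow. Once $E_C$ is known to be conserved, the a priori bound of the first paragraph keeps $\|u(t)\|_{\mathcal{D}(\sqrt{-H})}$ bounded uniformly in $t$, so the local lifespan stays bounded below along the solution and one iterates the local theory to reach any prescribed time $T$; this gives global existence and uniqueness. Continuous dependence in $C([0,T];\mathcal{D}(\sqrt{-H}))$ follows by concatenating the short-time Lipschitz estimates over a partition of $[0,T]$ whose mesh is fixed by the uniform a priori bound (upgrading to a Bona--Smith type argument if the contraction estimate turns out to lose regularity).

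I expect the main obstacle to be the nonlinear estimate at the energy level, i.e.\ distributing $(-H+C)^{1/2}$ over the cubic term: this operator is not a genuine differential operator, so one cannot simply invoke the standard fractional Leibniz rule but must either establish its analogue within the Anderson-adapted Sobolev scale or transfer the estimate to ordinary Besov/Sobolev spaces through the paracontrolled description of $\mathcal{D}(\sqrt{-H})$, which generates commutator terms with the distributional potential $\xi$ that have to be absorbed using the renormalised data built from $\xi$ that enter the construction of $H$. A secondary technical point is the rigorous justification of the conservation laws below the operator domain $\mathcal{D}(H)$.
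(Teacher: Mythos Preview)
Your strategy is different from the paper's, and the obstacle you flag is precisely what the paper avoids. You propose a self-contained LWP theory directly in the energy space $\mathcal{D}(\sqrt{-H})$ and then globalise via the conserved quantities. The paper instead assembles the result from pieces already available: global-in-time \emph{existence} of energy solutions was established in \cite{gubinelli2018semilinear} by approximation (as the introduction notes, what was missing there was well-posedness, not existence); \emph{uniqueness} then follows immediately from the embedding $\mathcal{D}(\sqrt{-H})\hookrightarrow\mathcal{H}^{1-\delta}$ combined with Theorem~\ref{thm:2dlwp_intro}, which gives LWP---hence uniqueness---in $\mathcal{H}^s$ for $s\in(\tfrac12,1)$; \emph{continuous dependence} is obtained by interpolating between $\mathcal{H}^{1-\delta}$ and $\mathcal{D}(H)=\Gamma\mathcal{H}^2$. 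So the paper never runs a contraction at the energy level and never has to distribute $(-H+C)^{1/2}$ over the cubic term.

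Your identified obstacle is real: the paper explicitly remarks after Theorem~\ref{thm:2dlwp} that bounding $\Gamma^{-1}(u|u|^2)$ is easier for $s<1$ than for $s\geqslant 1$, because in the sub-energy range the paracontrolled corrections are strictly more regular than $u$ and $u^\sharp$ themselves. Your route would yield a more self-contained argument, not relying on the prior global existence result, but at the cost of carrying out the energy-level nonlinear estimate through the $\Gamma$-transform; the paper's route is shorter but leans on \cite{gubinelli2018semilinear} for the existence part and uses the new Strichartz input only below regularity $1$.
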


\begin{theorem}
  \label{thm:stronggwp_intro}[2d GWP for strong solutions] Let $n \in
  \mathbb{N},$ then PDE
  \begin{align*}
       (i \partial_t - H) u & =  - u |u|^{2 n}  \text{on } \mathbb{T}^2\\
       u (0) & =  u_0
     \end{align*} 
  is globally well-posed(GWP) in the domain $\mathcal{D} (H)$ of the operator
  $H$, see Section \ref{sec:anderson}. Moreover we have that the norm grows at most polynomially in time, see Theorem \ref{thm:normgrowth} for a precise statement.
\end{theorem}

\begin{remark}
  \label{rem:comparison}In a previous version of the paper, we claimed that
  the bound \eqref{eqn:introstr} holds with only arbitrarily small $\delta >
  0$ loss in regularity but the proof contained an error. In the meantime,
  together with Mouzard {\cite{MZ}}, we proved a version of the theorem on
  smooth surfaces, here we now just give a simpler proof which uses the
  heavier machinery of Bourgain's periodic Strichartz
  estimates{\cite{boustr}}, {\cite{bourgain2015proof}}. In the current setting, where the potential is two-dimensional white noise, this proof doesn't improve on the estimate on a generic compact manifold, however in the three dimensional analogue, i.e. when the potential is white noise on the torus $\mathbb{T}^3$, this approach yields a useable Strichartz estimate as we show in \cite{DVJZ25}, whereas repeating the proof from \cite{MZ} does not yield a useable estimate.

  It is still an open interesting question whether the loss in derivatives
  can be reduced on the torus or manifolds with special geometries. For
  general compact surfaces, the results from {\cite{MZ}} can likely not be
  improved since it is as good as {\cite{BGT}}, which is optimal for general
  surfaces, with an $\varepsilon$ loss.
\end{remark}

The paper is organised as follows: In Section \ref{sec:strichartz} we recall
the well-known Strichartz estimates on the whole space and how their
counterparts on the torus differ. Section \ref{sec:anderson} is meant to
recapitulate the construction of the Anderson Hamiltonian and its domain
following {\cite{GUZ}}. In Section \ref{sec:strand} we prove the Strichartz
estimates for the Anderson Hamiltonian on $\mathbb{T}^2$ i.e. Theorem
\ref{thm:2dandstr(intro)}. Then in Section \ref{sec:solving} we utilise these
bounds to prove well-posedness of the multiplicative stochastic NLS in three
different regimes i.e. Theorems \ref{thm:2dlwp_intro},
\ref{thm:2denergy_intro} and \ref{thm:stronggwp_intro}.

{\textbf{Notations and conventions}}\\
The spaces we work in are $L^p$-spaces, for $p \in [1, \infty]$, meaning the
usual $p$-integrable Lebesgue functions; $\mathcal{H}^{\alpha}, W^{\alpha, p}$
spaces, with $\alpha \in \mathbb{R},\ p \in [1, \infty]$ the usual Sobolev
potential spaces with $\mathcal{H}^{\alpha} = W^{\alpha, 2} = B_{2,
2}^{\alpha}$; and $B_{r, q}^s$, the Besov spaces, whose definition is recalled
in the appendix and which cover $\mathcal{H}^{\alpha}$ and
$\mathcal{C}^{\alpha}$--so called H{\"o}lder-Besov spaces--as special cases.

Also we write
\[ \|f\|_X \assign \|f\|_{X (\mathbb{T}^2)} \text{ and } \|f (t)\|_{Y_{t ; [0,
   T]}} \assign \|f\|_{Y ([0, T])}, \]
where $X$ is one of the function spaces above on the torus $\mathbb{T}^2$ $Y$
is a function space in the time variable, usually $C [0, T], L^p [0, T]$ for
$1 \leqslant p \leqslant \infty$ and $T > 0.$

We write, as is quite common,
\[ a \lesssim b \]
to mean $a \leqslant Cb$ for a constant $C > 0$ independent of $a, b$ and
their arguments. Also we write
\[ a \sim b \Leftrightarrow a \lesssim b \text{ and } b \lesssim a. \]
For the sake of brevity we also allow {\textbf{every}} constant to depend
exponentially on the relevant noise norm $\| \Xi \|_{\mathcal{X}^{\alpha}}$,
see Definition \ref{def:2dnoise} for the exact definition of the norms; This
can be written schematically as
\[ \lesssim \Leftrightarrow \lesssim_{\Xi}, \]
this comes with the tacit understanding that everything is continuous with
respect to this norm. Another convention is that if we write something like
\[ \|F (u)\|_X \lesssim \|u\|_{\mathcal{H}^{\alpha + \varepsilon}}  \text{for
   } \varepsilon > 0, \]
we of course mean
\[ \|F (u)\|_X \leqslant C_{\varepsilon} \|u\|_{\mathcal{H}^{\alpha +
   \varepsilon}}  \text{ with } C_{\varepsilon} \rightarrow \infty \text{ as }
   \varepsilon \rightarrow 0. \]
\textbf{Acknowledgments.} This is a major reworking of a paper that
appeared as a part of the author's PhD dissertation \cite{diss}  which was inspired by fruitful
discussions with Professors Massimiliano Gubinelli and Herbert Koch and was partially supported by the German Research Foundation (DFG) via CRC 1060.\\
The author acknowledges funding by the Deutsche Forschungsgemeinschaft (DFG, German Research Foundation) – CRC/TRR 388 "Rough Analysis, Stochastic Dynamics and Related Fields“ – Project ID 516748464.
\\

%Although there was an error in the previous version, together with Antoine
%Mouzard in {\cite{MZ}} we fixed the proof of the Strichartz estimate and
%generalised many of the original results to the setting of compact surfaces,
%see Remark \ref{rem:comparison} for a comparison.

\section{Classical Strichartz estimates on the torus}\label{sec:strichartz}

We start by recalling the well-known Strichartz estimates for Schr{\"o}dinger
equations on $\mathbb{R}^d$.

\begin{theorem}
  \label{thm:strwhole}[Strichartz on $\mathbb{R}^d$,Theorem 2.3 in {\cite{tao2006nonlinear}}]Let $d \geqslant
  1$ and $(p, q)$ be a Strichartz pair, i.e.
  \[ \frac{2}{p} + \frac{d}{q} = \frac{d}{2}  \text{and } (d, p, q) \neq (2,
     2, \infty), \]
  we also take $(r', s')$ to be a dual Strichartz pair, which means that they
  are H{\"o}lder duals of a Strichartz pair $(r, s)$, explicitly
  \[ \frac{2}{r'} + \frac{d}{s'} = \frac{d + 4}{2}, \]
  then the following are true
  \begin{enumerateroman}
    \item $\|e^{it \Delta} u\|_{L_{t ; \mathbb{R}}^p L^q
    (\mathbb{R}^d)} \lesssim \|u\|_{L^2 (\mathbb{R}^d)}$ ``homogeneous
    Strichartz estimate''
    
    \item $\left| \int_{\mathbb{R}} e^{- it \Delta} F (t) dt \right|_{L^2
    (\mathbb{R}^d)} \lesssim \|F\|_{L^{r'} (\mathbb{R}) L^{s'}
    (\mathbb{R}^d)}$ ``dual homogeneous Strichartz estimate''
    
    \item $\left| \int_{t' < t} e^{i (t - t') \Delta} F (t') dt'
    \right|_{L_t^p (\mathbb{R}) L^q (\mathbb{R}^d)} \lesssim \|F\|_{L^{r'}
    (\mathbb{R}) L^{s'} (\mathbb{R}^d)}$ ``inhomogeneous Strichartz
    estimates''.
  \end{enumerateroman}
\end{theorem}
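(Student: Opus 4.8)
The plan is to run the classical $TT^{*}$ / Hardy--Littlewood--Sobolev argument, in the form due to Strichartz, Ginibre--Velo, Yajima and Keel--Tao. Everything rests on two elementary facts about the free propagator on $\mathbb{R}^{d}$: the $L^{2}$ isometry $\|e^{it\Delta}u\|_{L^{2}_{\mathbb{R}^{d}}}=\|u\|_{L^{2}_{\mathbb{R}^{d}}}$, and the dispersive estimate $\|e^{it\Delta}u\|_{L^{\infty}_{\mathbb{R}^{d}}}\lesssim |t|^{-d/2}\|u\|_{L^{1}_{\mathbb{R}^{d}}}$, the latter being read off from the explicit Gaussian convolution kernel of $e^{it\Delta}$. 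Interpolating between these in the spatial variable (Riesz--Thorin) yields $\|e^{it\Delta}u\|_{L^{q}_{\mathbb{R}^{d}}}\lesssim |t|^{-d(1/2-1/q)}\|u\|_{L^{q'}_{\mathbb{R}^{d}}}$ for every $q\in[2,\infty]$, with $q'$ the conjugate exponent.

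Next I would set $Tu\assign e^{it\Delta}u$, regarded as a map $L^{2}_{\mathbb{R}^{d}}\to L^{p}_{t}L^{q}_{x}$, and invoke the usual duality trichotomy: the homogeneous estimate (i), its adjoint form (ii) (which is precisely boundedness of $T^{*}F=\int_{\mathbb{R}}e^{-is\Delta}F(s)\,ds$), and boundedness of $TT^{*}F(t)=\int_{\mathbb{R}}e^{i(t-s)\Delta}F(s)\,ds$ from $L^{r'}_{t}L^{s'}_{x}$ to $L^{r}_{t}L^{s}_{x}$ are all equivalent. For a Strichartz pair $(r,s)$ with $r>2$ one bounds $\|TT^{*}F(t)\|_{L^{s}_{x}}$ by inserting the dispersive estimate under the time integral, obtaining $\int_{\mathbb{R}}|t-s|^{-d(1/2-1/s)}\|F(s)\|_{L^{s'}_{x}}\,ds$, and then applies the one-dimensional HLS inequality in time. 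The admissibility relation $\tfrac{2}{r}+\tfrac{d}{s}=\tfrac{d}{2}$ is exactly what forces the convolution kernel to carry the scaling exponent HLS requires, and the constraint $r>2$ is exactly what places that exponent in $(0,1)$; thus this step is essentially bookkeeping and settles (i) and (ii) for all admissible pairs but the endpoint.

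The genuinely delicate point, and the one I expect to be the main obstacle, is the endpoint $(p,q)=(2,\tfrac{2d}{d-2})$, which occurs only when $d\ge 3$ — note the exclusion $(d,p,q)\ne(2,2,\infty)$ in the statement is precisely the (false) $d=2$ endpoint. Here HLS degenerates since the kernel exponent equals $1$, and one replaces it by the Keel--Tao argument: decompose $TT^{*}$ dyadically according to $|t-s|\sim 2^{j}$, establish for each dyadic piece an off-diagonal estimate at a range of exponents by complex-interpolating the $L^{1}\to L^{\infty}$ and $L^{2}\to L^{2}$ bounds, and then resum in $j$ by an abstract bilinear real-interpolation (Bourgain-type) lemma to recover the endpoint exponents with no loss. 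I would simply quote this; reproducing it is the only part of the proof that is not routine.

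Finally, for the inhomogeneous estimate (iii): the untruncated operator $\int_{\mathbb{R}}e^{i(t-t')\Delta}F(t')\,dt'$ maps $L^{r'}_{t}L^{s'}_{x}\to L^{p}_{t}L^{q}_{x}$ just by composing the adjoint form of (i) for the pair $(r,s)$, which lands in $L^{2}$, with (i) itself for the pair $(p,q)$, so no new analysis is needed. To pass to the retarded integral $\int_{t'<t}$ one invokes the Christ--Kiselev lemma, which removes the time-ordering at no cost precisely because $r'<p$ away from the double endpoint; at the double endpoint Christ--Kiselev is unavailable and one again falls back on the Keel--Tao bilinear estimate, which yields the retarded inhomogeneous bound directly. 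Assembling these pieces gives (i)--(iii).
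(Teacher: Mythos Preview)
Your outline is a correct and standard proof of the classical Strichartz estimates: energy conservation plus the dispersive decay, interpolated and fed into the $TT^{*}$ machine with Hardy--Littlewood--Sobolev for the non-endpoint pairs, Keel--Tao for the endpoint, and Christ--Kiselev (or the bilinear Keel--Tao estimate at the double endpoint) for the retarded inhomogeneous bound. There is nothing to fault in the logic.

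That said, there is nothing to compare it to: the paper does not prove this theorem at all. It is quoted verbatim as Theorem~2.3 of Tao's monograph and used as background before turning to the compact-manifold and torus estimates that are actually needed. So your write-up is not an alternative to the paper's argument but rather a sketch of the proof the paper simply imports by citation. If anything, you have supplied more than the paper does; for the purposes of this work a one-line reference to {\cite{tao2006nonlinear}} (or to Keel--Tao directly) would suffice.
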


Next we cite some, by now, classical Strichartz estimates on the torus and how they
differ from those on the whole space. Moreover we sketch how they allow to
solve NLS in spaces below $\mathcal{H}^{\frac{d}{2} + \varepsilon}$,which is
an algebra.

The first results we state are the Strichartz estimates proved by
Burq-Gerard-Tzvetkov in {\cite{burq2004strichartz}}. They hold on general
manifolds, i.e. not only the torus. The results are not optimal for the torus
but we nonetheless cite them because the methods we use are strongly inspired
by this paper.

\begin{theorem}
  \label{thm:classStr}[Strichartz estimates on compact
  manifolds,{\cite{burq2004strichartz}} Theorem 1] Let
  \[ \frac{2}{p} + \frac{d}{q} = \frac{d}{2} . \]
  We have on the finite time interval $[0, 1]$
  \begin{equation}
    \|e^{- it \Delta} u\|_{L_{t ; [0, 1]}^p L^q(M)} \lesssim
    \|u\|_{\mathcal{H}^{\frac{1}{p}}(M)} \label{eqn:classStr}
  \end{equation}
  and
  \[ \left| \int^t_0 e^{- i (t - s) \Delta} f (s) ds \right|_{L_{t ; [0, 1]}^p
     L^q(M)} \lesssim \int^1_0 \|f (s)\|_{\mathcal{H}^{\frac{1}{p}}(M)} ds. \]
\end{theorem}

Note that, as opposed to the whole space, one has a loss of $\frac{1}{p}$
derivatives. Together with Mouzard, we proved an analogous Strichartz estimate
on smooth surfaces using a microlocal approach as in {\cite{BGT}}, see also
{\cite{Mou}} for the construction of the operator $H$ on a smooth surface.

\begin{theorem}
  \label{thm:MZStr}[Anderson Strichartz estimates on smooth surfaces,
  {\cite{MZ}}] Let $M$ be a smooth compact surface and
  \[ \frac{2}{p} + \frac{d}{q} = 1. \]
  We have on the finite time interval $[0, 1]$
  \[ \|e^{- itH} u\|_{L_{t ; [0, 1]}^p L^q} \lesssim
     \|u\|_{\mathcal{H}^{\frac{1}{p} + \varepsilon}} \]
  and
  \[ \left| \int^t_0 e^{- i (t - s) H} f (s) ds \right|_{L_{t ; [0, 1]}^p L^q}
     \lesssim \int^1_0 \|f (s)\|_{\mathcal{H}^{\frac{1}{p} + \varepsilon}} ds
  \]
  for any $\varepsilon > 0.$
\end{theorem}

The next result we cite is an almost sharp Strichartz estimate on the torus
due to Bourgain and Demeter in {\cite{bourgain2015proof}} which was refined in
{\cite{killip2016scale}} by Killip and Visan whose version we cite because it
is more amenable to our situation. The result is stated for functions which
are localised in frequency but the corresponding Sobolev bound is immediate.

\begin{theorem}
  \label{thm:imprstr}[Sharp Strichartz estimate, Theorem 1.2
  {\cite{killip2016scale}}, {\cite{bourgain2015proof}}]Let $d \geqslant 1$ and
  $p \geqslant \frac{2 (d + 2)}{d}$, then, for any $\varepsilon > 0$ we have
  \[ \|e^{- it \Delta} P_{\leqslant N} f\|_{L_{t ; [0, 1]}^p L^p
     (\mathbb{T}^d)} \lesssim N^{\frac{d}{2} - \frac{d + 2}{p} + \varepsilon}
     \|f\|_{L^2 (\mathbb{T}^d)} . \]
  For $d = 2$ this means $p \geqslant 4$.
\end{theorem}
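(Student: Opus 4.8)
The plan is to deduce this from the $\ell^2$-decoupling theorem of Bourgain--Demeter for the paraboloid, of which the stated inequality is essentially the \emph{discrete restriction} incarnation. Writing $P_{\leq N}f=\sum_{|n|\leq N}\hat f(n)\,\mathrm e(n\cdot x)$ with $\mathrm e(y):=e^{2\pi i y}$, the function $F(x,t):=e^{-it\Delta}P_{\leq N}f$ is, for an appropriate normalisation of $e^{-it\Delta}$, a trigonometric polynomial on $\mathbb T^{d+1}=\mathbb T^d\times(\mathbb R/\mathbb Z)$ whose frequencies lie on (a fixed dilate of) the truncated discrete paraboloid $\Lambda_N=\{(n,|n|^2):n\in\mathbb Z^d,\ |n|\le N\}\subset\mathbb Z^{d+1}$, and $\|f\|_{L^2}^2=\sum_{|n|\le N}|\hat f(n)|^2$ by Parseval. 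So it suffices to prove the exponential-sum bound
\[
 \Big\|\sum_{(n,|n|^2)\in\Lambda_N}a_n\,\mathrm e\big(n\cdot x+|n|^2t\big)\Big\|_{L^p([0,1]^{d+1})}\ \lesssim_{\varepsilon}\ N^{\varepsilon}\,N^{\frac d2-\frac{d+2}{p}}\Big(\sum_n|a_n|^2\Big)^{1/2}
\]
for $p\geq p_c:=\tfrac{2(d+2)}{d}$; restricting the $t$-integration back to $[0,1]$ only decreases the left-hand side, and the asserted estimate for a general $f\in L^2$ is exactly this with $a_n=\hat f(n)$.

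Next I would reduce to the critical exponent. At $p=p_c$ the power of $N$ is $0$, and once the bound is known there, the full range $p\geq p_c$ follows by interpolating, for the \emph{fixed} function $F$, the $L^{p_c}$ bound against the trivial estimate $\|F\|_{L^\infty}\leq\sum_n|a_n|\leq N^{d/2}\big(\sum_n|a_n|^2\big)^{1/2}$: since $\tfrac d2-\tfrac{d+2}{p}=\tfrac d2\big(1-\tfrac{p_c}{p}\big)$, this reproduces the stated power up to an arbitrarily small loss $N^{\varepsilon}$.

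For $p=p_c$ I would run the standard periodicity-plus-parabolic-rescaling reduction to set up an application of decoupling. Using a Fej\'er-type weight one bounds the average of $|F|^{p_c}$ over $[0,1]^{d+1}$ by its average against a Schwartz bump adapted to a box of side $R=N^2$; multiplying $F$ by such a cutoff fattens each frequency $(n,|n|^2)$ into an $O(R^{-1})=O(N^{-2})$-cube. The parabolic dilation $(\xi,\tau)\mapsto(\xi/N,\tau/N^2)$ then carries $\Lambda_N$ onto an $\sim N^{-1}$-separated subset of the unit paraboloid $\{(\eta,|\eta|^2):|\eta|\le 1\}\subset\mathbb R^{d+1}$, with each cube landing in a distinct cap of diameter $\delta^{1/2}=N^{-1}$, i.e.\ at decoupling scale $\delta=N^{-2}$. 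The $\ell^2$-decoupling theorem for the $d$-dimensional paraboloid in $\mathbb R^{d+1}$ holds precisely for $2\le p\le \tfrac{2(d+2)}{d}$, so at $p=p_c$ it yields $\|F\|_{L^{p_c}}\lesssim_\varepsilon N^{\varepsilon}\big(\sum_\theta\|F_\theta\|_{L^{p_c}(w)}^2\big)^{1/2}$; since each cap $\theta$ contains a single frequency, the $\theta$-summand is just $|a_{n(\theta)}|$ times a volume factor, there are $\sim N^{d}$ caps, and bookkeeping the rescaling Jacobians returns the factor $N^{d/2-(d+2)/p_c}=N^0$. Undoing the reductions gives the claim.

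The genuine difficulty is entirely in the input: the Bourgain--Demeter $\ell^2$-decoupling theorem, whose proof is the hard part, proceeding by induction on scales together with the multilinear Kakeya/restriction machinery of Bennett--Carbery--Tao and Bourgain--Guth, and is by no means elementary. Everything above -- the transference between periodic exponential sums and the Euclidean extension operator, the parabolic rescaling, and the interpolation up to large $p$ -- is routine. Note that the cruder Theorem~\ref{thm:classStr} already gives an estimate of this shape but with a fixed loss of $\tfrac1p$ derivatives rather than $\varepsilon$; the whole point of using decoupling is to shrink that loss to an arbitrarily small power of $N$, which is what is needed to reach the sharp Sobolev thresholds exploited later.
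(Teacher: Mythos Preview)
The paper does not prove this statement at all; it is simply quoted from \cite{killip2016scale} and \cite{bourgain2015proof}, so there is nothing to compare against beyond the cited literature. Your outline---rewriting $e^{-it\Delta}P_{\leq N}f$ as an exponential sum on the discrete paraboloid, reducing to the critical exponent by interpolation with the trivial $L^\infty$ bound, and then transferring to the Euclidean setting via parabolic rescaling so as to apply the Bourgain--Demeter $\ell^2$-decoupling theorem at scale $\delta=N^{-2}$---is precisely the route taken in those references (the discrete-restriction consequence is already in \cite{bourgain2015proof}, and the refined version stated here is in \cite{killip2016scale}). Your interpolation arithmetic is correct, and you are right that the only genuinely hard input is the decoupling theorem itself; everything else is standard transference.
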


This will allow us to give a comparatively simple proof of Theorem
\ref{thm:2dandstr(intro)} which is very similar to Theorem \ref{thm:MZStr}. In
order to prove this, we first state the following simple result.

\begin{proposition}
  \label{cor:scaling}[Inhomogeneous short time bound]
  
  Let $I = [t_0, t_1]$ be a subinterval of $[0, 1]$ and $4 \leqslant p <
  \infty$ and let $f \in L_{[0, 1]}^{\infty} {\mathcal{H}^{1 - \frac{4}{p} +
  \varepsilon}} $ for $\varepsilon > 0$, then
  \begin{eqnarray}
    \left\| \int^t_{t_0} e^{- i (t - s + t_0) \Delta} f (s) d s
    \right\|_{L_I^p L^p} & \lesssim \int_I \|f (s) \|_{\mathcal{H}^{1 -
    \frac{4}{p} + \varepsilon}} d s \\
    & \lesssim | I | \|f\|_{L_I^{\infty} \mathcal{H}^{1 - \frac{4}{p} +
    \varepsilon}} . 
  \end{eqnarray}
\end{proposition}

\begin{proof}
  The bound follows in the usual way that one obtains inhomogeneous estimtes
  from the linear Strichartz estimate Theorem \ref{thm:imprstr}.
\end{proof}

Furthermore, we give a quick sketch about why these kinds of estimates are
used for solving NLS.

Take for simplicity the cubic NLS on the two-dimensional torus.
\begin{eqnarray*}
  i \partial_t u - \Delta u & = & - u |u|^2  \text{on } \mathbb{T}^2\\
  u (0) & = & u_0 .
\end{eqnarray*}
The Duhamel formula reads
\begin{equation}
  u (t) = e^{- it \Delta} u_0 - i \int^t_0 e^{- i (t - s) \Delta} |u|^2 u (s)
  ds. \label{eqn:2dnls-class}
\end{equation}
Since for $u_0 \in \mathcal{H}^{\sigma}$ with $\sigma \in \mathbb{R}$ we have
\[ e^{- it \Delta} u_0 \in C_t \mathcal{H}^{\sigma} \]
with the continuity in time following from Stone's theorem, it is natural to
try to solve \eqref{eqn:2dnls-class} in a space like $C_t
\mathcal{H}^{\sigma}$ for, say, $\sigma \geqslant 0$. Now, since the unitary
group $e^{- it \Delta}$ has no smoothing properties, one possible way
to bound the nonlinear expression in \eqref{eqn:2dnls-class} is

\begin{align}
  \left| \int^t_0 e^{- i (t - s) \Delta} |u|^2 u (s) ds \right|_{C_{t ; [0,
  T]} \mathcal{H}^{\sigma}} & \lesssim \int^T_0 \||u|^2 u
  (s)\|_{\mathcal{H}^{\sigma}} ds \\
  & \lesssim \int^T_0 \|u (s)\|^2_{L^{\infty}} \|u
  (s)\|_{\mathcal{H}^{\sigma}} ds,  \label{eqn:CHbound}
\end{align}

where the second inequality follows from the ``tame'' estimate (see Lemma
\ref{lem:tame}). In the case that $\sigma > \frac{d}{2}$($= 1$ in 2d) the
$L^{\infty}$ norm is controlled by the $\mathcal{H}^{\sigma}$ norm so it is
easy to close the fixed point argument. But even in the case of
$\mathcal{H}^1$, which is natural as it is the ``energy space'', one is not
able to close the contraction argument without additional input.

The key observation to make -- and to see where the Strichartz estimates enter
-- is that in \eqref{eqn:CHbound} we can apply H{\"o}lder's inequality in time
to obtain
\[ \int^T_0 \|u (s)\|^2_{L^{\infty}} \|u (s)\|_{\mathcal{H}^{\sigma}} ds
   \lesssim \|u\|^2_{L_{[0, T]}^2 L^{\infty}} \|u\|_{L_{[0, T]}^{\infty}
   \mathcal{H}^{\sigma}} \]
and note that we need not control the $L_t^{\infty} L_x^{\infty}$ norm of the
solution but it suffices to control the $L_t^2 L_x^{\infty}$ norm.

So, if we were able to control the $L_t^p L_x^{\infty}$ norm of the right-hand
side of \ \eqref{eqn:2dnls-class} by the $C_t \mathcal{H}^{\sigma}$ norm of
$u$ we would be able to get a contraction to get a solution to
\eqref{eqn:2dnls-class} for a short time interval. Bounding the
$L^{\infty}$ norm directly is hopeless but recall the Sobolev embedding in $d
-$dimensions
\[ W^{\frac{2}{q} + \varepsilon, q} \hookrightarrow L^{\infty}  \text{ for any
   } q \in (1, \infty)  \text{ and } \varepsilon > 0. \]
This is the point where the Strichartz estimates come in, since, for example
by Theorem \ref{thm:imprstr}, one gets the bound
\[ \|e^{- it \Delta} u_0 \|_{L_{t ; [0, T]}^p L^{\infty}} \lesssim \|e^{- it
   \Delta} u_0 \|_{L_{t ; [0, T]}^p W^{\frac{2}{p} + \varepsilon, p}}
   \lesssim_T \|u_0 \|_{\mathcal{H}^{1 - \frac{2}{p} + \varepsilon}} \quad
   \text{for } 4 \leqslant p < \infty \]
for the linear evolution. If we take $\varepsilon > 0$ small then the
regularity exponent $1 - \frac{2}{p} + \varepsilon < 1$ so this is strictly
better than what we would get from estimating the $L^{\infty}$ norm by the
$\mathcal{H}^{1 + \varepsilon}$ norm.

For the nonlinear term we get similarly (assuming for simplicity $T \le 1$)

\begin{align*}
  \left\| \int^t_0 e^{- i (t - s) \Delta} |u|^2 u (s) ds \right\|_{L_{t ; [0,
  T]}^p L^{\infty}} & \lesssim \left\| \int^t_0 e^{- i (t - s) \Delta} |u|^2 u
  (s) ds \right\|_{L_{t ; [0, T]}^p W^{\frac{2}{p} + \varepsilon, p}}\\
  & \lesssim \int^T_0 \||u|^2 u (s)\| ds_{\mathcal{H}^{1 - \frac{2}{p} +
  \varepsilon}}\\
  & \lesssim T \|u\|^3_{L_{[0, T]}^{\infty} \mathcal{H}^{\tilde{\sigma}}}
\end{align*}

where $1 - \frac{2}{p} + \varepsilon < \tilde{\sigma} < 1$ can be computed
explicitly using the fractional Leibniz rule, see Lemma \ref{lem:fracleib}.

Clearly these bounds can be sharpened in different ways but the important
thing is that Strichartz estimates lead to local-in-time well-posedess for
some range of $\sigma \leqslant 1$.

\section{The Anderson Hamiltonian in 2 dimensions}\label{sec:anderson}

One aim of this work is to establish Strichartz estimates for the
Anderson Hamiltonian which is formally given by
\[ H \text{``$=$''} \Delta + \xi (x) - \infty, \]
where $\xi (x)$ is spatial white noise, see \eqref{def:white}. This operator
was initially studied by Allez and Chouk on $\mathbb{T}^2$ in
{\cite{allez_continuous_2015}} and later by Gubinelli,Ugurcan and the author
in {\cite{GUZ}} on $\mathbb{T}^2$ and $\mathbb{T}^3$ using the theory of
\textit{Paracontrolled Distributions} which was introduced in
{\cite{gubinelli2015paracontrolled}}. The operator was also studied by
Labb{\'e} in {\cite{labbe2019continuous}} using the theory of
\textit{Regularity Structures} introduced in {\cite{hairer2014theory}} also
including boundary conditions and on surfaces by Mouzard in {\cite{Mou}}.
Naively one might think that it is simply a suitably well-behaved perturbation
of the Laplacian in which case Theorem 6 in {\cite{burq2004strichartz}} would
more or less directly apply. However, it was shown that the domain of $H$ in
both $2 d$ and $3 d$ can be quite explicitly determined and we even have (almost surely)
\[ \mathcal{D} (H) \cap \mathcal{H}^2 = \{ 0 \}, \]
so it is tricky to directly compare the operators $H$ and $\Delta .$

\

\

We briefly recall some of the main ideas from {\cite{GUZ}} in the 2d setting
and slightly reformulate it. An observation made in
{\cite{allez_continuous_2015}} was that a function $u$ is in the domain of $H$
if
\begin{equation}
  u - (u \prec (1 - \Delta)^{- 1} \xi + B_{\Xi} (u)) \in \mathcal{H}^2,
  \label{eqn:ACdomain}
\end{equation}
see the appendix for the definition and properties of paraproducts. By the
paraproduct estimates(Lemma \ref{lem:para}) and the regularity of the noise,
the term $u \prec (1 - \Delta)^{- 1} \xi$ is no better than $\mathcal{H}^{1 -
\varepsilon} .$ The ``lower order'' correction term $B_{\Xi}$ is also worse
than $\mathcal{H}^2$ (in fact it is $\mathcal{H}^{2 - \varepsilon}$). This for
example rules out that $u$ is regular, rather it fixes its regularity at
$\mathcal{H}^{1 - \varepsilon} .$ See Definition \ref{def:2dnoise} for the
exact definition of the \textit{enhanced noise} $\Xi .$

One of the chief innovations in {\cite{GUZ}} as opposed to
{\cite{allez_continuous_2015}} was that by observing that the statement
\eqref{eqn:ACdomain} is equivalent to
\[ u - P_{> N} (u \prec (1 - \Delta)^{- 1} \xi + B_{\Xi} (u)) \in
   \mathcal{H}^2, \]
where $P_{> N} =\mathcal{F}^{- 1} \mathbb{I}_{> N} \mathcal{F},$ for any $N >
0$ cuts out the low frequencies. By choosing $N$ large enough depending on the
$\mathcal{X}^{\alpha}$ norm of $\Xi$ (see Definition \ref{def:2dnoise}) it was
shown that the map
\begin{eqnarray*}
  \Phi (u) & \assign & u - P_{> N (\Xi)} (u \prec (1 - \Delta)^{- 1} \xi +
  B_{\Xi} (u))\\
  \mathcal{D} (H) & \mapsto & \mathcal{H}^2
\end{eqnarray*}
which sends a paracontrolled function to its remainder admits an inverse which
we call $\Gamma$ and we rename $\Phi$ as $\Gamma^{- 1} .$

\

In the following we use the short-hand notation
\begin{equation}
  u = \Gamma u^{\sharp} = P_{> N (\Xi)} (\Gamma u^{\sharp} \prec (1 -
  \Delta)^{- 1} \xi + B_{\Xi} (\Gamma u^{\sharp})) + u^{\sharp},
  \label{eqn:Gamma2d}
\end{equation}
where the term $B_{\Xi}$ is explicitly given by
\[ B_{\Xi} (u) \assign (1 - \Delta)^{- 1} (\Delta u \prec X + 2 \nabla u \prec
   \nabla X + \xi \prec u - u \prec (\xi \diamondsuit X)), \]
with
\[ X = (1 - \Delta)^{- 1} \xi \text{ and } \xi \diamondsuit X = \Xi_2 
   \text{ is the second component of } \Xi, \text{ see }  \eqref{defXi2} . \]
Moreover, in the new coordinates, $u^{\sharp}$, the operator $H$ is given by
\begin{align}
  H \Gamma u^{\sharp} =& (\Delta-1) u^{\sharp} + u^{\sharp} \circ \xi +
  P_{\leqslant N} (\Gamma u^{\sharp} \prec \xi + \Gamma u^{\sharp} \succ \xi)
  + P_{> N} (- B_{\Xi} (\Gamma u^{\sharp}) - \Gamma u^{\sharp} \prec X +\nonumber\\
  +&\Gamma u^{\sharp} \succeq \Xi_2 + C (\Gamma u^{\sharp}, X, \xi) + B_{\Xi}
  (\Gamma u^{\sharp}) \circ \xi), \label{eqn:defHGamma2d}
\end{align}
recalling the convention \textquotedblleft $\succeq=\succ+\circ$\textquotedblright
It is also natural to consider the operator $H$ conjugated by $\Gamma$, i.e.
\begin{equation}
  H^{\sharp} \assign \Gamma^{- 1} H \Gamma, \label{2dgammaH}
\end{equation}
which can be expressed as
\begin{align}
  H^{\sharp} u^{\sharp} & = H \Gamma u^{\sharp} - P_{> N} (H \Gamma
  u^{\sharp} \prec X + B_{\Xi} (H \Gamma u^{\sharp})) \nonumber\\
  & =  (\Delta - 1) u^{\sharp} + u^{\sharp} \circ \xi + P_{\leqslant N}
  (\Gamma u^{\sharp} \prec \xi + \Gamma u^{\sharp} \succ \xi) + P_{> N} (-
  B_{\Xi} (\Gamma u^{\sharp}) - \Gamma u^{\sharp} \prec X +\nonumber\\& +\Gamma u^{\sharp}
  \succeq \Xi_2 + C (\Gamma u^{\sharp}, X, \xi) + B_{\Xi} (\Gamma u^{\sharp})
  \circ \xi) - P_{> N} (H \Gamma u^{\sharp} \prec X + B_{\Xi} (H \Gamma
  u^{\sharp})) .  \label{eqn:2dHsharp}
\end{align}
We remark that while $H$ was shown to be self-adjoint on $L^2$, the \textquotedblleft sharpened\textquotedblright
operator $H^{\sharp}$ is not and in particular the map $\Gamma$ is not
unitary.

We quote some results from {\cite{GUZ}} and {\cite{MZ}}, this result assumes
that we have shifted the operator by a constant. We tacitly assume $- H$ to be
positive as opposed to just being semi-bounded which is achieved by adding a
constant depending on the noise as in {\cite{GUZ}}.

\begin{theorem}
  \label{thm:Gammaemb}[Proposition 2.27, Lemma 2.33, Lemma 2.34
  {\cite{GUZ}},Proposition 1.14 {\cite{MZ}}]We have, writing again $u = \Gamma
  u^{\sharp},$
  \begin{enumerateroman}
    \item $\| u \|_{\mathcal{D} (H)} = \| H u \|_{L^2} \sim \| u^{\sharp}
    \|_{\mathcal{H}^2}$
    
    \item $\| u \|_{\mathcal{D} \left( \sqrt{- H} \right)} = \left\| \sqrt{-
    H} u \right\|_{L^2} = (- (u, H u)_{L^2})^{\frac{1}{2}} \sim \| u^{\sharp}
    \|_{\mathcal{H}^1}$
    
    \item $D (H) \hookrightarrow L^{\infty}$ \text{and} $D \left( \sqrt{- H}
    \right) \hookrightarrow L^p$ \text{for any} $2 \leqslant p < \infty$
    
    \item  $\| u^{\sharp} \|_{\mathcal{H}^s} \sim \| u \|_{\mathcal{H}^s} \sim
    \left\| H^{\frac{s}{2}} u \right\|_{L^2}$ for $s \in (- 1, 1)$.
    
    \ 
  \end{enumerateroman}
\end{theorem}

The following proposition quantifies the idea that the transformed operator
$H^{\sharp} \assign \Gamma^{- 1} H \Gamma$ is a lower-order perturbation of
the Laplacian.

\begin{proposition}
  \label{prop:str:2dpert}Take $u^{\sharp} \in \mathcal{H}^2,$ then the
  following holds for any $\varepsilon, \kappa > 0$ with $1 + \varepsilon +
  \kappa \leqslant 2$
  \[ \| (H^{\sharp} - \Delta) u^{\sharp} \|_{\mathcal{H}^{\kappa}} \lesssim \|
     u^{\sharp} \|_{\mathcal{H}^{1 + \varepsilon + \kappa}} \]
    \end{proposition}
  \begin{proof}
    This essentially follows by noting that in terms of regularity the worst
    term to bound in \eqref{eqn:2dHsharp} is $u^{\sharp} \circ \xi$ which is
    bounded by(see Lemma \ref{lem:para})
    \[ \| u^{\sharp} \circ \xi \|_{\mathcal{H}^{\kappa}} \lesssim \|
       u^{\sharp} \|_{\mathcal{H}^{1 + \varepsilon + \kappa}} \| \xi
       \|_{\mathcal{C}^{- 1 - \varepsilon}} . \]
    The other terms are bounded similarly by $\mathcal{H}^s$ norms of
    $u^{\sharp}$ with $s < 1$ multiplied by H{\"o}lder norms of objects
    related to $\xi$ which appear in the $\mathcal{X}^{\alpha} -$norm, see
    Definition \ref{def:2dnoise}. This result is also proved in Proposition
    2.1 in {\cite{MZ}}.
  \end{proof}

We collect all relevant results about the map $\Gamma$.
\begin{lemma}\label{lem:Gamma}{\tmdummy}
  \begin{enumerateroman}
    \item $\Gamma : \mathcal{H}^s \rightarrow \mathcal{H}^s$ is bounded and
    invertible for any $s \in [0, 1)$
    
    \item $\Gamma : L^p \rightarrow L^p$ is bounded and invertible for any $p
    \in [2, \infty]$
    
    \item $\Gamma : \mathcal{H}^1 \rightarrow \mathcal{D} \left( \sqrt{- H}
    \right)$ is bounded and invertible.
    
    \item $\Gamma : \mathcal{H}^2 \rightarrow \mathcal{D} (H)$ is bounded and
    invertible.
    
    \item One has the bounds
    \begin{equation}
      \| (\Gamma - \tmop{id}) v \|_{\mathcal{H}^{1 - \varepsilon - s}}
      \lesssim \| v \|_{\mathcal{H}^{- s}} \quad \text{for } s \in (0, 1 -
      \varepsilon)  \text{and } \varepsilon > 0 \label{reg:G-1}
    \end{equation}
    and
    \begin{equation}
      \| \Gamma v - v - v \prec X \|_{\mathcal{H}^{1 + \sigma - \varepsilon}}
      \lesssim {\| v \|_{\mathcal{H}^{\sigma}}}  \text{ for } \sigma \in (0, 1
      - \varepsilon)  \text{and } \varepsilon > 0 \label{reg:BX}
    \end{equation}
    where both bounds remain true if we replace the Sobolev spaces
    $\mathcal{H}^s$ by Besov-H{\"o}lder spaces $\mathcal{C}^s .$
  \end{enumerateroman}
\end{lemma}
  \begin{proof}
    Everything but ii. and v. was proved in Section 2.1.1 of {\cite{GUZ}}. The
    cases $p = 2, \infty$ were also already proved. For a different $p$ we
    note that the result follows by interpolation. To prove v., one simply
    observes that in the case \eqref{reg:G-1} the dominant term is $u \prec X$
    which has precisely that bound and in the latter case \eqref{reg:BX} one
    just has to consider how $B_{\Xi} (v)$ is bounded for $v \in
    \mathcal{H}^{\sigma}$ and use the paraproduct estimates from the appendix.
  \end{proof}

Lastly we prove a statement about the ``sharpened'' group, which is the
transformation of the unitary group $e^{i t H}$
\[ e^{{-i t H^{\sharp}} } \assign \Gamma^{- 1} e^{-i t H} \Gamma . \]
It is clear that one still has the group property (even though the unitarity
is lost) since
\[ e^{{-i t H^{\sharp}} } e^{{-i s H^{\sharp}} } = \Gamma^{- 1} e^{-i t H} \Gamma
   \Gamma^{- 1} e^{-i s H} \Gamma = \Gamma^{- 1} e^{-i (t + s) H} \Gamma = e^{-i
   (t + s) {H^{\sharp}} } . \]
and one has the bounds for all times $t \in \mathbb{R}$
\begin{eqnarray*}
  \| e^{-i t H} u \|_{L^2} & \lesssim & \| u \|_{L^2}\\
  \| e^{-i t H} u \|_{\mathcal{D} (H)} & \lesssim & \| u \|_{\mathcal{D} (H)}
  .\\
  \left\| H^{\frac{s}{2}} e^{-i t H} u \right\|_{L^2} & \lesssim & \left\|
  H^{\frac{s}{2}} u \right\|_{L^2} \quad \text{for } s \in \mathbb{R}
\end{eqnarray*}
We have the analogous results for the transformed group.

\begin{lemma}
  \label{lem:Hsbound}For $s \in [0, 2]$ we get the following at any time $t
  \in \mathbb{R}$
  \[ \| e^{-i t H^{\sharp}} v \|_{\mathcal{H}^s} \lesssim \| v
     \|_{\mathcal{H}^s} \]
    \end{lemma}
  \begin{proof}
    See Proposition 2.2 in {\cite{MZ}}.
  \end{proof}

We finish this section by recalling the definition of the enhanced noise space
$\mathcal{X}^{\alpha}$ and the fact that smooth regularisations of the white
noise $\xi$ which can be defined as a random series on $\mathbb{T}^2$
\begin{equation}
  \xi (\omega) = \underset{n \in \mathbb{Z}^2}{\sum} g_n (\omega) e_n \
  \text{with } e_n  \text{ the Fourier basis and } g_n = \overline{g_{- n}} 
  \text{ i.i.d complex Gaussians} \label{def:white}
\end{equation}
converges in the $\mathcal{X}^{\alpha}$ topology, see e.g.
{\cite{allez_continuous_2015}} for details.

\begin{definition}
  \label{def:2dnoise}Let $\alpha = 1 + \kappa'$ for $0 < \kappa' \ll 1,$then
  we define the metric space
  \begin{eqnarray*}
    \mathcal{X}^{\alpha} & \assign & \overline{\{ (g, (1 - \Delta)^{- 1} g
    \circ g - a) : g \in C^{\infty} (\mathbb{T}^2), a \in \mathbb{R} \}}
    |_{\mathcal{C}^{- \alpha} \times \mathcal{C}^{2 - 2 \alpha}}
  \end{eqnarray*}
  which we call the enhanced noise space.
  
  For a smooth regularisation $\xi_{\varepsilon} = \xi \ast
  \rho_{\varepsilon}$ with smooth standard mollifier $\rho_{\varepsilon}$ and
  we set
  \begin{equation}
    \Xi_{\varepsilon}^2 \assign (1 - \Delta)^{- 1} \xi_{\varepsilon} \circ
    \xi_{\varepsilon} - c_{\varepsilon}
  \end{equation}
  for $c_{\varepsilon} =\mathbb{E} ((1 - \Delta)^{- 1} \xi_{\varepsilon} \circ
  \xi_{\varepsilon}) \sim \log \left( \frac{1}{\varepsilon} \right)$ a
  diverging constant.
\end{definition}

\begin{lemma}
  The lift of the regularised noise, $(\xi_{\varepsilon},
  \Xi_{\varepsilon}^2)$ converges to a limit $(\xi, \Xi^2)$ in
  $\mathcal{X}^{\alpha}$ in probability.
  
  In particular, from now on, we can use that the limit objects pathwise have
  the following regularities
  \begin{equation}
    \xi \in \mathcal{C}^{- 1 - \kappa'}, \xi \diamondsuit (1 - \Delta)^{- 1}
    \xi = \Xi^2 \in \mathcal{C}^{- 2 \kappa'} \quad \text{for } 0 < \kappa'
    \ll 1 \text{small.} \label{defXi2}
  \end{equation}
\end{lemma}

\begin{remark} \label{rem:genpot}
	Clearly a generic element in $\mathcal{C}^{-\alpha}$ will not have a lift in $\mathcal{X}^{\alpha}$, however, note that any potential in $\mathcal{C}^{-1+\kappa}$ does. Also, by using the Besov embedding, Lemma
  \ref{lem:besovem}, we can see that $V \in L^2 \hookrightarrow \mathcal{C}^{-
  1 - \kappa}$ and $(1 - \Delta)^{- 1} V \in \mathcal{H}^2
  \hookrightarrow C^{1 - \kappa}$ so $(V \circ (1 - \Delta)^{- 1} V) \in
  \mathcal{H}^{1 - \kappa} {\hookrightarrow \mathcal{C}^{- 2
  \kappa}} $ meaning that $L^2$ is canonically contained in the space
  $\mathcal{X}^{\alpha}$ so in principle all our results would be valid for
  $L^2$ potentials $V$, and in principle one can push this further but this
  would lead to a higher loss in regularity in the Strichartz estimate. In a
  recent paper {\cite{quasimode}}, Huang and Sogge proved Strichartz estimates
  like \eqref{eqn:classStr} for $- \Delta + V$ for $V \in L^{1 + \delta}$ so
  our result would not immediately imply theirs although in their paper it is
  not clear whether $V$ may be chosen as a distribution. It would be
  interesting to see if both approaches could be combined.
\end{remark}

\section{Strichartz estimates for the Anderson Hamiltonian}\label{sec:strand}

In this section we prove Theorem \ref{thm:2dandstr(intro)}.

\begin{proposition}
  \label{prop:2dmild}We have the following identity for a regular function,
  say $v \in \mathcal{H}^2$, and at any time $t \in \mathbb{R}$:
  \begin{equation}
    (e^{- itH^{\sharp}} - e^{- it \Delta}) v = -i \int^t_0 e^{- i (t - s)
    \Delta}  ((H^{\sharp} - \Delta) (e^{- isH^{\sharp}} v)) ds,
    \label{eqn:difference}
  \end{equation}
  moreover, fixing some $t_0 \in \mathbb{R}$, we get the related result
  \begin{equation}
    (e^{- i (t - t_0) H^{\sharp}} - e^{- i (t - t_0) \Delta}) v =- i
    \int^t_{t_0} e^{- i (t - s) \Delta}  ((H^{\sharp} - \Delta) (e^{- i (s -
    t_0) H^{\sharp}} v)) ds, \label{eqn:differencet0}
  \end{equation}
  where we recall, from Proposition \ref{prop:str:2dpert}, that there is a
  cancellation between $H^{\sharp}$ and the Laplacian.
  
  Moreover, on the interval $[t_0, t_1]$ with $| t_0 - t_1 | \leqslant 1$, we
  have for any small $\delta > 0$ the bound
  \begin{equation}
    \|(e^{- i (t - t_0) H^{\sharp}} - e^{- i (t - t_0) \Delta}) v\|_{L_{t ;
    [t_0, t_1]}^{\infty} \mathcal{H}^{\sigma}} \lesssim |t_1 - t_0 |
    \|v\|_{\mathcal{H}^{\sigma + 1 + \delta}} \label{eqn:prop11}
  \end{equation}
  for $\sigma \in [0, 1 - \delta)$.
  
  Also, for $r \geqslant 4$ we have
  \begin{eqnarray}
    \|(e^{- i (t - t_0) H^{\sharp}} - e^{- i (t - t_0) \Delta}) v\|_{L_{t ;
    [t_0, t_1]}^r W^{\sigma, r}} & \lesssim & \int^{t_1}_{t_0} \|e^{- i (s -
    t_0) H^{\sharp}} v\|_{\mathcal{H}^{\sigma + 2 - \frac{4}{r} + \delta}} 
    \label{eqn:prop3}\\
    & \lesssim & |t_1 - t_0 | \|v\|_{\mathcal{H}^{\sigma + 2 - \frac{4}{r} +
    \delta}}  \label{eqn:prop33}
  \end{eqnarray}
  for $\sigma \ge 0$ s.t. $\sigma + 2 - \frac{4}{r} + \delta \le 2$.
\end{proposition}

  \begin{proof}
    To prove \eqref{eqn:difference}, note that the l.h.s. solves a PDE. Set
    $v_1 (t) = e^{- it \Delta} v$, \ $v_2 (t) = e^{- itH^{\sharp}} v$ and
    $\bar{v} = v_1 - v_2$. Then
    \begin{eqnarray*}
      (i \partial_t - \Delta) v_1 & = & 0\\
      v_1 (0) & = & v\\
      (i \partial_t - \Delta) v_2 & = & (H^{\sharp} - \Delta) v_2\\
      v_2 (0) & = & v\\
      (i \partial_t - \Delta) \bar{v} & = & - (H^{\sharp} - \Delta) v_2\\
      \bar{v} (0) & = & 0
    \end{eqnarray*}
    From this we deduce that the mild formulation for $\bar{v}$ reads
    \[ \bar{v} (t) = -i \int^t_0 e^{- i (t - s) \Delta} (H^{\sharp} -
       \Delta) (v_2 (s)) ds \]
    which is \eqref{eqn:difference}. To prove \eqref{eqn:differencet0}, we
    proceed as above, with the difference that we replace $t$ by $t - t_0$ and
    do a change of variables in the integral.
    
    The bound \eqref{eqn:prop11} is clear using Lemma \ref{lem:Hsbound}. For
    the bound \eqref{eqn:prop3}, we apply first \eqref{eqn:difference} then we
    use the inhomogeneous Strichartz estimate from Proposition
    \ref{cor:scaling} to the right hand side and then Proposition
    \ref{prop:str:2dpert} to bound the term inside the integral. Subsequently,
    \eqref{eqn:prop33} follows by applying Lemma \ref{lem:Hsbound} and noting
    that the integrand does not depend on $s$ any more.
  \end{proof}

Now we are able to combine the above results to get the first new result.

\begin{theorem}
  \label{thm:2dandstr}[2-D Anderson Strichartz] Let $r \geqslant 4$, $\sigma
  \ge 0, \delta > 0$ s.t. $\sigma + (1 + \delta) \left( 1 - \frac{3}{r}
  \right) < 1$.Then we have on a finite time interval $[0, T]$, $T \leqslant
  1$ the following bound
  \begin{equation}
    \|e^{- itH^{\sharp}} v\|_{L_{t ; [0, T]}^r W^{\sigma, r}} \lesssim
    \|v\|_{\mathcal{H}^{\sigma + (1 + \delta) \left( 1 - \frac{3}{r} \right)}}
    \label{eqn:betternewstr}
  \end{equation}
  and
  
  \begin{align}
    & \left| \int^t_0 e^{- i (t - s) H^{\sharp}} f (s) ds \right|_{L_{t ; [0,
    T]}^r W^{\sigma, r}} \lesssim \int^T_0 \|f (s)\|_{\mathcal{H}^{\sigma + (1
    + \delta) \left( 1 - \frac{3}{r} \right)}} ds  \label{eqn:betternewstr2}
  \end{align}
\end{theorem}
  \begin{proof}
    We start by proving \eqref{eqn:betternewstr} with $\sigma = 0$ and $r =
    4$. The general case follows by interpolation. By Proposition
    \ref{prop:2dmild} and the Strichartz estimates in Theorem
    \ref{thm:imprstr} we can write, setting $v_N = P_{\leqslant N} v$, $I
    \assign [t_0, t_1]$ a subinterval of length $\sim \frac{1}{N}$ and $\delta
    > 0$
    \begin{align*}
      P_{\leqslant N} e^{- itH^{\sharp}} v_N & = P_{\leqslant N} e^{- i (t -
      t_0) H^{\sharp}} e^{- it_0 H^{\sharp}} v_N \nonumber\\
      & = e^{- i (t - t_0) \Delta} P_{\leqslant N} e^{- it_0 H^{\sharp}} v_N
      + P_{\leqslant N}  (e^{- i (t - t_0) H^{\sharp}} - e^{- i (t - t_0)
      \Delta}) e^{- it_0 H^{\sharp}} v_N \nonumber\\
      & = e^{- i (t - t_0) \Delta} P_{\leqslant N} e^{- it_0 H^{\sharp}} v_N
      - i \int^t_{t_0} P_{\leqslant N} e^{- i (t - s) \Delta} (H^{\sharp} -
      \Delta) (e^{- i (s - t_0) H^{\sharp}} v_N) ds. 
    \end{align*}
    Now we decompose the time interval into slices $I_j=[t^j_0,t^j_1]$ s.t. $\cup_j I_j = [0, T]$ with
    $|I_j | \sim \frac{1}{N}$
    
    \begin{align*}
      \|P_{\le N} e^{- itH^{\sharp}} v_N \|^4_{L_{t ; [0, T]}^4 L^4} & =
      \sum_{I_j = [t^j_0, t^j_1]} \|P_{\le N} e^{- itH^{\sharp}} v_N
      \|^4_{L_{t ; I_j}^4 L^4}\\
      & \lesssim \sum_{I_j = [t^j_0, t^j_1]} \|e^{- it \Delta} P_{\le N} e^{-
      it^j_0 H^{\sharp}} v_N \|^4_{L_{t ; I_j}^4 L^4} + \\&+\|P_{\le N} (e^{- i (t
      - t^j_0) H^{\sharp}} - e^{- i (t - t^j_0) \Delta}) e^{- it^j_0
      H^{\sharp}} v_N \|^4_{L_{t ; I_j}^4 L^4}\\
      & \lesssim \sum_{I_j = [t^j_0, t^j_1]} \|e^{- it^j_0 H^{\sharp}} v_N
      \|^4_{\mathcal{H}^{\delta}} + (\star)\\
      & \lesssim \sum_{I_j = [t^j_0, t^j_1]} \|v_N
      \|^4_{\mathcal{H}^{\delta}} + (\star)\\
      & \lesssim N^{1 + 4 \delta} \|v_N \|^4_{L^2} + \sum_{I_j = [t^j_0,
      t^j_1]} (\star) .
    \end{align*}
    
    Here we have used \eqref{eqn:differencet0} in each subinterval and applied
    the triangle inequality from the first to the second line. In the next
    step we have used the short-time bound from Proposition \ref{cor:scaling}
    and lastly Lemma \ref{lem:Hsbound} and the fact that there are $\sim N$
    summands allow us to conclude.
    
    \
    
    Next, we treat the perturbative part which we called $(\star)$
    
    \begin{align*}
      \|P_{\le N} (e^{- i (t - t_0) H^{\sharp}} - e^{- i (t - t_0) \Delta})
      e^{- it_0 H^{\sharp}} v_N \|^4_{L_{t ; I_j}^4 L^4} & = \left|
      \int^t_{t^j_0} e^{- i (t - s) \Delta} P_{\le N} (H^{\sharp} - \Delta)
      (e^{- i (s - t_0^j) H^{\sharp}} v_N) ds \right|^4_{L_{t ; I_j}^4 L^4}\\
      & \lesssim \left( \int_{I_j} \|P_{\le N} (H^{\sharp} - \Delta) (e^{- i
      (s - t_0^j) H^{\sharp}} v_N)\|_{\mathcal{H}^{\frac{\delta}{2}}} ds
      \right)^4\\
      & \lesssim \left( \int_{I_j} \|e^{- i (s - t_0^j) H^{\sharp}} v_N
      \|_{\mathcal{H}^{1 + \delta}} ds \right)^4\\
      & \lesssim \left( \int_{I_j} \|v_N \|_{\mathcal{H}^{1 + \delta}} ds
      \right)^4\\
      & \lesssim | I_j |^4 N^1 \|v_N \|_{\mathcal{H}^{\delta}}^4,\\
      & \lesssim N^{4 \delta} \|v_N \|_{L^2}^4
    \end{align*}
    
    having used the second bound in Proposition \ref{cor:scaling} to get to
    the second line and thereafter Proposition \ref{prop:str:2dpert}, Lemma
    \ref{lem:Hsbound} and Bernstein's inequality, Lemma \ref{lem:bernstein}.\\
    Thus we can conclude
    \[ \|P_{\le N} e^{- itH^{\sharp}} v_N \|^4_{L_{t ; [0, T]}^4 L^4} \lesssim
       N^{1 + 4 \delta} \|v_N \|^4_{L^2} \]
    hence
    \[ \|P_{\le N} e^{- itH^{\sharp}} P_{\leq N} v\|_{L_{t ; [0, T]}^4 L^4}
       \lesssim N^{\frac{1}{4} + \delta} \|P_{\leq N} v\|_{L^2} \]
    for any $\delta > 0$, which directly implies the result. The case $\sigma
    > 0$ is analogous since on this level it basically corresponds to
    multiplying with a power of $N$and for general $r \geqslant 4$ we
    interpolate between the bounds
    \begin{eqnarray*}
      \|P_{\le N} e^{- itH^{\sharp}} P_{\leq N} v\|_{L_{t ; [0, T]}^4 L^4} &
      \lesssim \|P_{\leq N} v\|_{\mathcal{H}^{\frac{1}{4} + \delta}}\\
      & \text{and}\\
      \|P_{\le N} e^{- itH^{\sharp}} P_{\leq N} v\|_{L_{t ; [0, T]}^{\infty}
      L^{\infty}} & \lesssim \|P_{\leq N} v\|_{\mathcal{H}^{1 + \delta}}
    \end{eqnarray*}
    where the latter is simply the trivial bound obtained from the Sobolev
    embedding $\mathcal{H}^{1 + \delta} \hookrightarrow L^{\infty} .$
    
    The inhomogeneous Strichartz estimate \eqref{eqn:betternewstr2} follows
    from the first in the usual way.
  \end{proof}

\section{Well-posedness of multiplicative stochastic NLS}\label{sec:solving}

\subsection{Low-regularity solutions}\label{sec:lwp}

We turn our attention to ``low-regularity'' solutions to the stochastic NLS
\begin{equation}
 \begin{aligned}
 (i \partial_t - H) u &=  - u | u |^{2 n}  \\
 u(0)&=u_0 \in \mathcal{H}^{\sigma}
 \end{aligned}\label{eqn:snls}
\end{equation}
for some $\sigma<1,$ which is formally
\[ (i \partial_t - \Delta) u = u \cdummy \xi + \infty u - u | u |^{2 n} . \]
In {\cite{GUZ}} this PDE was studied in the ``high regularity'' regime,
meaning $u_0 \in \mathcal{D} (H)$ or $\mathcal{D} \left( \sqrt{- H} \right) .$
Now we employ the Strichartz estimates to solve it in spaces of lower
regularity. In particular now we solve it in a space that does \textit{not}
depend on the realisation of the noise $\xi .$

Since this result is analogous to the result from {\cite{MZ}} on general
smooth surfaces where only the cubic case was considered, we only sketch the
proof and point out where the value of $n$ enters.

\begin{theorem}
  \label{thm:2dlwp}[LWP below energy space] For $\sigma \in \left( 1 -
  \frac{1}{2n}, 1 \right)$ \eqref{eqn:snls} is LWP in $\mathcal{H}^{\sigma} .$
  More precisely, there exists a short time $T > 0$ which is of size $T \sim
  (1 + \| u_0 \|_{\mathcal{H}^{\sigma}})^{- K}$ for some $K$ depending on $n$
  s.t. there exists a unique solution to
  \begin{equation}
    u (t) = e^{- i t H} u_0 - i \int^t_0 e^{- i (t - s) H} u | u |^{2 n} (s) d
    s
  \end{equation}
  in the space $C_{[0, T]} \mathcal{H}^{\sigma} \cap L_{[0, T]}^{2 n}
  W^{\frac{1}{n} + \kappa, 2 n}$ for $\kappa > 0$ sufficiently small where the
  solution in this norm depends continuously on the initial data in
  $\mathcal{H}^{\sigma} .$
\end{theorem}
  \begin{proof}
    By applying $\Gamma^{- 1}$to both sides and renaming both $\Gamma^{- 1}
    u_0 = u_0^{\sharp}$ and $\Gamma^{- 1} u = u^{\sharp}$ this becomes
    \[ u^{\sharp} (t) = e^{- i t H^{\sharp}} u^{\sharp}_0 - i \int^t_0 e^{- i
       (t - s) H^{\sharp}} \Gamma^{- 1} ((\Gamma u^{\sharp}) | \Gamma
       u^{\sharp} |^{2 n}) (s) d s. \]
    We want to show that this equation has a solution for a short time by
    setting up a fixed point argument in the space
    \[ C_{[0, T]} \mathcal{H}^{\sigma} \cap L_{[0, T]}^{2 n + \kappa} W^{\frac{1}{n}
       + \kappa, \frac{2 n}{1 - \kappa}}, \]
    where $T > 0$ is chosen later and $\kappa > 0$ is small enough that
    \[ \frac{1}{n} + \kappa + (1 + \kappa) \left( 1 - \frac{3 (1 - \kappa)}{2
       n} \right) \leqslant \sigma \]
    Then we bound, using several times Lemma \ref{lem:Gamma} and the
    Strichartz estimate Theorem \ref{thm:2dandstr}

    \begin{eqnarray*}
      \| u^{\sharp} \|_{L_{[0, T]}^{2 n + \kappa} W^{\frac{1}{n} + \kappa,
      \frac{2 n}{1 - \kappa}}} & \lesssim & \| u_0^{\sharp}
      \|_{\mathcal{H}^{\sigma}} + \int^T_0 \| \Gamma^{- 1} (\Gamma u^{\sharp}
      | \Gamma u^{\sharp} |^{2 n}) (\tau) \|_{\mathcal{H}^{\sigma}} d \tau\\
      & \lesssim & \| u_0^{\sharp} \|_{\mathcal{H}^{\sigma}} + \int^T_0 \|
      \Gamma u^{\sharp} | \Gamma u^{\sharp} |^{2 n} (\tau)
      \|_{\mathcal{H}^{\sigma}} d \tau\\
      & \lesssim & \| u_0^{\sharp} \|_{\mathcal{H}^{\sigma}} + \int^T_0 \|
      \Gamma u^{\sharp} (\tau) \|^{2 n}_{L^{\infty}} \| \Gamma u^{\sharp}
      (\tau) \|_{\mathcal{H}^{\sigma}} d \tau\\
      & \lesssim & \| u_0^{\sharp} \|_{\mathcal{H}^{\sigma}} + \| u^{\sharp}
      \|^{2 n}_{L_{[0, T]}^{2 n} L^{\infty}} \| u^{\sharp} \|_{L_{[0,
      T]}^{\infty} \mathcal{H}^{\sigma}}\\
      & \lesssim & \| u_0^{\sharp} \|_{\mathcal{H}^{\sigma}} + T^{\kappa} \|
      u^{\sharp} \|^{2 n}_{L_{[0, T]}^{2 n + \kappa} W^{\frac{1}{n} + \kappa,
      \frac{2 n}{1 - \kappa}}} \| u^{\sharp} \|_{L_{[0, T]}^{\infty}
      \mathcal{H}^{\sigma}} .
    \end{eqnarray*}
    For the other term we bound
    \begin{eqnarray*}
      \| u^{\sharp} \|_{L_{[0, T]}^{\infty} \mathcal{H}^{\sigma}} & \lesssim &
      \| u_0^{\sharp} \|_{\mathcal{H}^{\sigma}} + \int^T_0 \| \Gamma^{- 1}
      (\Gamma u^{\sharp} | \Gamma u^{\sharp} |^{2 n}) (\tau)
      \|_{\mathcal{H}^{\sigma}} d \tau\\
      & \lesssim & \| u_0^{\sharp} \|_{\mathcal{H}^{\sigma}} + T^{\kappa} \|
      u^{\sharp} \|^{2 n}_{L_{[0, T]}^{2 n + \kappa} W^{\frac{1}{n} + \kappa,
      \frac{2 n}{1 - \kappa}}} \| u^{\sharp} \|_{L_{[0, T]}^{\infty}
      \mathcal{H}^{\sigma}} .
    \end{eqnarray*}
    From here we can get a contraction for small times in the usual way.
    
    Thus we solve the sharpened equation and by applying $\Gamma$ we get a
    solution to the original equation. Observe that $u^{\sharp} {\in
    W^{\frac{1}{n} + \kappa, \frac{2 n}{1 - \kappa}}} $ implies that $(\Gamma
    - 1) u^{\sharp} {\in W^{1 - \kappa, \frac{2 n}{1 + \kappa}}}  $and thus
    ${u \in W^{\frac{1}{n} + \kappa, \frac{2 n}{1 - \kappa}}}  .$
  \end{proof}

\begin{remark}
  The fact that $s < 1$ as opposed to $s \geqslant 1$ makes the bound for the
  term $\Gamma^{- 1} (u | u |^2)$ easier since the paraproducts and other
  correction terms are actually more regular than $u$ and $u^{\sharp} .$ 
\end{remark}

\begin{remark}
  If we used the Strichartz estimates proved in {\cite{MZ}} which are
  \[ \| e^{- i t H} v \|_{L_{[0, 1]}^p L^q} \lesssim \| v
     \|_{\mathcal{H}^{\frac{1}{p} + \delta}} \quad \text{for } \frac{1}{p} +
     \frac{1}{q} = \frac{1}{2} \quad \text{and } \delta > 0 \quad
     \text{small,} \]
  we would get exactly the same condition \ $\sigma > 1 - \frac{1}{2n}$ for the
  wellposedness with the only difference that the function spaces in the
  contraction would be a bit different i.e. one would use $L_{[0, T]}^r L^s$
  spaces with different parameters $r, s$.
\end{remark}

\subsection{Global well-posedness in the energy space }

Now we turn to proving global in time well-posedness of
\begin{equation}
  \begin{aligned}
    (i \partial_t - H) u & =  - u |u|^{2 n}  \text{on } \mathbb{T}^2\\
    u (0) & =  u_0
  \end{aligned} \label{eqn:pdeenergy}
\end{equation}
\[ \  \]
in the \textit{energy space} $\mathcal{D}(\sqrt{-H})$. We recall that this is the space of functions 

\[ v \in L^2  : \text{\quad} \| \Gamma^{- 1} v \|_{\mathcal{H}^1} \sim \|
   v \|_{\mathcal{D} \left( \sqrt{- H} \right)} = | (- u, H u) |^{\frac{1}{2}}
   < \infty, \]
see Lemma \ref{lem:Gamma} and Lemma \ref{lem:adj}.\\ 
It is natural to consider solutions in this space, since the equation has the conserved energy
\[ E (u (t)) \assign -\frac{1}{2} (u, H u) + \frac{1}{2 n + 2} \int | u |^{2 n
	+ 2} = E (u_0) \]
	which controls the $\|u\|_{L^\infty_{[0,T]}\mathcal{D}(\sqrt{-H})}\sim\|u^\sharp\|_{L^\infty_{[0,T]}\mathcal{H}^1}$ norm. Using this bound, one can get global weak solutions as was done in \cite{GUZ} Section 3.2.2 in the cubic case but one does not obtain uniqueness and continuity on the data.\\
	The main difficulty in this setting, opposed to the low-regularity and strong regimes, is that it does not seem possible to make sense of the nonlinear term in the mild formulation i.e. bounding $\|\int_{0}^{t}e^{-i(t-s)H}|u|^{2n}u(s)ds\|_{\mathcal{D}(\sqrt{-H})}\sim\|\int_{0}^{t}e^{-i(t-s)H^\sharp}\Gamma^{-1}|\Gamma u^\sharp|^{2n}\Gamma u^\sharp(s)ds\|_{\mathcal{H}^1}$ in terms of the energy norm. One can get uniqueness from the Strichartz estimates as we will see, but \\
	Since we have a flow for the sharpened equation on $\mathcal{H}^\sigma$ for $\sigma\in(1-\frac{1}{2n},1)\cup\{2\},$ it would seem natural to try to \textquotedblleft interpolate\textquotedblright in order to extend the flow to $\mathcal{H}^1.$ Luckily, this is a straightforward consequence of a \textit{nonlinear interpolation} result from the recent work \cite{nonlinearint}. We cite the version of the main theorem which we will apply. 
\begin{theorem}\label{thm:nonlinearint}[Nonlinear interpolation]
	Let $R, T > 0$ and $s_0 < s < s_1$, moreover let
	\[ u_0 \in \mathcal{H}^s (\mathbb{T}^d) \quad B_s (u_0, R) \assign \{ w \in \mathcal{H}^s
	(\mathbb{T}^d) : \| w - u_0 \|_{\mathcal{H}^s} < R \} \]
	and assume that we have a (nonlinear) map
	\[ \Phi : B_s (u_0, R) \rightarrow L^{\infty} ([0, T] ; \mathcal{H}^{s_0}
	(\mathbb{T}^d)) \]
	satisfying the following properties
	\begin{itemize}
		\item \textbf{Weak Lipschitz bound}: There exists a constant $C_0>0$ (that
		may depend on $R, T, u_0$) s.t.
		\begin{equation}
			\text{for all } v_0, w_0 \in B_s (u_0, R) \quad \| \Phi (v_0) - \Phi
			(w_0) \|_{L_{[0, T]}^{\infty} \mathcal{H}^{s_0}} \leqslant C_0 \| v_0 - w_0
			\|_{\mathcal{H}^{s_0}} .
		\end{equation}
		\item \textbf{Tame estimate}: There exists a constant $C_1>0$ (that may
		depend on $R, T, u_0$) s.t.
		\begin{equation}
			\text{for all } v_0 \in C^{\infty} (\mathbb{T}^d) \cap B_s (u_0, R)
			\quad \| \Phi (v_0) \|_{L_{[0, T]}^{\infty} \mathcal{H}^{s_1}} \leqslant C_1 \|
			v_0 \|_{\mathcal{H}^{s_1}} .
		\end{equation}
		\item \textbf{Continuitiy in time}
		\[ \text{for all } v_0 \in B_s (u_0, R) \qquad \Phi (v_0) \in C ([0, T] ;
		\mathcal{H}^{s_0} (\mathbb{T}^d)). \]
	\end{itemize}
	Then we have $\text{for all } v_0 \in B_s (u_0, R)$ we have $\Phi (v_0) \in
	C ([0, T] ; \mathcal{H}^s (\mathbb{T}^d))$ and
	\[ \quad \Phi : v_0 \ni B_s (u_0, R) \mapsto \Phi (v_0) \in C ([0, T] ; \mathcal{H}^s
	(\mathbb{T}^d)) \]
	is continuous.
\end{theorem}

\begin{proof}
	This is a special case of Theorem 18 in \cite{nonlinearint}, in fact it is almost the same
	as Theorem 1, see Remark 4 in \cite{nonlinearint} that says exactly that Theorem 1 in \cite{nonlinearint} also holds in
	the periodic setting.
\end{proof}

We now apply this directly to the ``sharpened'' flow of the stochastic NLS
with $s_0 = 0, s = 1$ and $s_1 = 2$ and $u_0 = 0$.

In fact, we define $\Phi (v_0)$ as the unique fixed point of the map
\[ \Psi (v) (t) \assign e^{-i t H^{\sharp}} v_0 - i \int^t_0 e^{-i (t - s)
	H^{\sharp}} \Gamma^{- 1} (| \Gamma v |^{2 n} \Gamma v (s)) d s \]
in the space $C ([0, T^\star] ; \mathcal{H}^{1 - \kappa} (\mathbb{T}^2))$ where $v_0 \in \mathcal{H}^1$
and $T^\star \sim \| v_0 \|^{- K}_{\mathcal{H}^{1 - \kappa}}$for some $K > 0.$ This was shown to exist in Section \ref{sec:lwp} and by the energy bound (see the first point of Proposition \ref{prop:gronwall}) if $v_0\in\mathcal{H}^1$, then we can instead solve up to the smaller time $T' \sim \| v_0 \|^{- K}_{\mathcal{H}^{1}}$ and restart the flow up to time $2T'$ etc. So we have in particular that $\Phi$ satisfies the third point in Theorem \ref{thm:nonlinearint} for a generic time $T>0$\\
The other two properties can be proved by combining the Strichartz estimates and Gronwall's inequality in similar ways.
\begin{proposition}\label{prop:gronwall}
Let $\Phi,R$ be as above. Then we have
\begin{itemize}
	\item The following energy inequality holds:
	\begin{align}
	 \| \Phi (v_0) \|^2_{L_{[0, T]}^{\infty} \mathcal{H}^1 (\mathbb{T}^2)} \lesssim E
	(v_0) \lesssim R^2 . \end{align}
	\item One has the weak Lipschitz bound
	\begin{align}
		\| \Phi (v_0) - \Phi (w_0) \|_{L_{[0, T]}^{\infty} L^2} \leqslant C
		(E (v_0), T) \| v_0 - w_0 \|_{L^2}
	\end{align}
	\item One has the tame estimate 
	\begin{align}
	\| \Phi (v_0) (t) \|_{\mathcal{H}^2} \lesssim e^{TC'(E(v_0))}C(\|v_0\|_{\mathcal{H}^2})\label{eqn:expH2}
	\end{align}
\end{itemize}
\end{proposition}
\begin{proof}
The first point follows for nice enough $v_0$ by energy conservation and the norm equivalence $\|u^\sharp\|_{\mathcal{H}^1}\sim\|\Gamma u^\sharp\|_{\mathcal{D}(\sqrt{H})}.$ For $v_0\in\mathcal{H}^1$ it follows by approximation, see also \cite{GUZ}.\\
To prove the second point, one uses that one has the $L^2$ inequality 
\begin{eqnarray*}
	\frac{d}{d t} \| \Phi (v_0) - \Phi (w_0) \|^2_{L^2} (t) & \lesssim & (\| |
	\Phi (v_0) (t) |^{2 n} \|_{L^{\infty} (\mathbb{T}^2)} + \| | \Phi (w_0) (t)
	|^{2 n} \|_{L^{\infty} (\mathbb{T}^2)}) \| \Phi (v_0) - \Phi (w_0)
	\|^2_{L^2} (t)\\
	& \Rightarrow & \\
	\| \Phi (v_0) - \Phi (w_0) \|^2_{L^2} (t) & \lesssim & e^{C \int^t_0 (\| |
		\Phi (v_0) |^{2 n} (s) \|_{L^{\infty} (\mathbb{T}^2)} + \| | \Phi (w_0) |^{2
			n} (s) \|_{L^{\infty} (\mathbb{T}^2)}) {ds}} \| v_0 - w_0 \|^2_{L^2} 
\end{eqnarray*}
where the first bound follows by inserting the equation and the second is an application of Gronwall's inequality.\\
In order to control the term in the exponential, we recall the Strichartz estimate from Theorem \ref{thm:2dandstr}
\[ \| e^{-i t H^{\sharp}} g \|_{L_{[0, 1]}^4 W^{\sigma, 4} (\mathbb{T}^2)}
\lesssim \| g \|_{\mathcal{H}^{\frac{1}{4} + \sigma + \kappa}} \sigma \geqslant 0,
\kappa > 0 \text{ and } \frac{1}{4} + \sigma + \kappa < 1 \]
which, by the energy bound, implies
\[ \| \Phi (v_0) \|_{L_{[0, 1]}^4 W^{\sigma, 4}} \lesssim \| v_0 \|_{\mathcal{H}^1} + \|
v_0 \|^{2 n + 1}_{\mathcal{H}^1} \]
using that every term in the mild formulation is controlled by the $\mathcal{H}^1$ norm.\\
Thus we get the bound for small $\varepsilon,\delta>0$
\begin{eqnarray*}
	\int^1_0 \| | \Phi (v_0) |^{2 n} (s) \|_{L^{\infty} (\mathbb{T}^2)} d s &
	\lesssim & \int^1_0 \| | \Phi (v_0) |^{2 n} (s) \|_{W^{\frac{1}{2} + \delta,
			4 - \varepsilon} (\mathbb{T}^2)} d s\\
	& \lesssim & \int^1_0 \| \Phi (v_0) (s) \|_{L^p}^{2 n - 1} \| \Phi (v_0)
	(s) \|_{W^{\frac{1}{2} + \delta, 4} (\mathbb{T}^2)} d s\\
	& \lesssim & \| \Phi (v_0) (s) \|_{L_{[0, 1]}^{\infty} L^p}^{2 n - 1} \|
	\Phi (v_0) \|_{L_{[0, 1]}^4 W^{\frac{1}{2} + \delta, 4} (\mathbb{T}^2)}\\
	& \lesssim & C (E (v_0))
\end{eqnarray*}
for $p$ large enough depending on $\varepsilon$, having used the Sobolev
embedding and fractional Leibnitz rule, Lemma \ref{lem:fracleib}. By iterating this, we get
\[ \int^T_0 \| | \Phi (v_0) |^{2 n} (s) \|_{L^{\infty} (\mathbb{T}^2)} d s
\lesssim T C (E (v_0)) \]
This means, we have
\begin{eqnarray*}
	\| \Phi (v_0) - \Phi (w_0) \|_{L_{[0, 1]}^{\infty} L^2(\mathbb{T}^2)} & \leqslant & C (E
	(v_0)) \| v_0 - w_0 \|_{L^2(\mathbb{T}^2)} .
\end{eqnarray*}
and similarly
\[ \begin{array}{lll}
	\| \Phi (v_0) - \Phi (w_0) \|_{L_{[0, T]}^{\infty} L^2} & \leqslant & C
	(E (v_0), T) \| v_0 - w_0 \|_{L^2}
\end{array} \]
for general $T > 0.$ \\

Lastly, to prove the tame estimate, one proceeds similarly to \cite{GUZ} (where also all these steps were justified rigorously), note first the bound
\[ \| \Phi (v_0) (t) \|_{\mathcal{H}^2} \lesssim \| \partial_t \Phi (v_0) (t) \|_{L^2}
+ C (E (u_0)) \]
which follows from the equation
and, using the mild formulation,
\begin{eqnarray*}
	\partial_t \Phi (v_0) (t) & = & -e^{-i t H^{\sharp}} i H^{\sharp} v_0 +
	\int^t_0 e^{i (t - s) H^{\sharp}} \partial_s \Gamma^{- 1} (| \Gamma \Phi
	(v_0) |^{2 n} \Gamma \Phi (v_0) (s)) d s \\&&+ \Gamma^{- 1} (| \Gamma \Phi (v_0)
	|^{2 n} \Gamma \Phi (v_0) (t))\\
	& \Rightarrow & \\
	\| \partial_t \Phi (v_0) (t) \|_{L^2} & \leqslant & C (\| v_0 \|_{H^2}) +
	\int^t_0 \| \partial_t \Phi (v_0) (s) \|_{L^2} \| | \Phi (v_0) |^{2 n} (s)
	\|_{L^{\infty}} d s
\end{eqnarray*}
having used the energy bound on the last term and the $L^2$ boundedness of the sharpened group. Ultimately, we have by Gronwall again
\begin{align*}
	\| \partial_t \Phi (v_0) (t) \|_{L^2}  \leqslant & C (\| v_0 \|_{H^2})
	e^{\int^t_0 \| | \Phi (v_0) |^{2 n} (s) \|_{L^{\infty}} d s}\\
	 \leqslant & C (\| v_0 \|_{H^2}) e^{T C' (E (v_0))}
\end{align*}
which is precisely what we wanted to show.
\end{proof}\\
This concludes the proof of global well-posedness in
the energy space, since we have shown that $\Phi$ satisfies the assumptions of Theorem \ref{thm:nonlinearint}.\\
\begin{remark}
Even in the case $n = 1$, which was treated in \cite{debussche2018schrodinger} and \cite{GUZ}, the bound \eqref{eqn:expH2} is an
improvement since there one has an iterated exponential bound in time whereas
this gives a simple exponential bound.\\
In Section \ref{sec:2ndorder} we will actually  that for $v_0 \in \mathcal{H}^2$ and $\| v_0
\|_{\mathcal{H}^1} < R$ we have
\[ \| \Phi (v_0) \|_{L_{[0, T]}^{\infty} H^2} \lesssim C (R, T) \| v_0
\|_{H^2} \]
where the constant $C$ is growing only polynomially in $T$.
\end{remark}
\subsection{Global well-posedness of strong solutions and growth of norms}\label{sec:2ndorder}

We consider the equation
\begin{equation}
  \begin{aligned}
    (i \partial_t - H) u & = - u | u |^{2 n}\\
    u (0) & = u_0
  \end{aligned}. \label{nls:strong}
\end{equation}
for general $n$ and $u_0 \in \mathcal{D} (H)$ which we call the
\textit{strong regime.} This is equivalent to solving
\begin{equation}
  (i \partial_t - H^{\sharp}) u^{\sharp} = - \Gamma^{- 1} (\Gamma u^{\sharp} |
  \Gamma u^{\sharp}|^{2n}) \label{nls:strongsharp}
\end{equation}
with initial data $u^{\sharp} (0) = \Gamma^{- 1} u_0 \in \mathcal{H}^2 .$

The first order conserved energy is given by
\[ E (u (t)) \assign -\frac{1}{2} (u, H u) + \frac{1}{2 n + 2} \int | u |^{2 n
   + 2} = E (u_0) \]
which controls the norm $\| u \|_{L_{[0, T]}^{\infty} \mathcal{D} \left(
\sqrt{H} \right)} \sim \| u^{\sharp} \|_{L_{[0, T]}^{\infty}
\mathcal{H}^1}$.

We recall from {\cite{GUZ}} that in this regime we have local-in-time
wellposedness from a contraction argument in the mild formulation
\eqref{eqn:intromild} following on the one hand from the formal norm equivalence
$\| u \|_{L_{[0, T]}^{\infty} \mathcal{D} (H)} \sim \| \partial_t u
\|_{L_{[0, T]}^{\infty} L^2}$, see Lemma \ref{lem:strongH}, and the
observation that (see {\cite{GUZ}} for details)
\[ \left\| \int^t_0 e^{-i (t - s) H} u | u |^{2 n - 2} (s) d s \right\|_{L_{t ;
   [0, T]}^{\infty} \mathcal{D} (H)} \lesssim \| \partial_t (u | u |^{2 n -
   2}) \|_{L_{[0, T]}^{\infty} \mathcal{D} (H)} \lesssim \| \partial_t u
   \|_{L_{[0, T]}^{\infty} \mathcal{D} (H)} \| u \|^{2 n - 2}_{L_{[0,
   T]}^{\infty} L^{\infty}} \]
and the embedding $\mathcal{D} (H) \hookrightarrow L^{\infty}$, since one can
not apply $H$ to a nonlinear term but one may integrate by parts in the time
integral to instead.

\begin{remark}
  In {\cite{GUZ}}, Remark 3.9, it was wrongly claimed that global-in-time
  well-posedness follows in a similar way for general powers. Local
  well-posedness, however, can be proved in the same way for general powers.
\end{remark}

The main result of this section is the following, which says that we can
extend these solutions to all times but we also get a bound on the growth of
Sobolev bounds as in {\cite{PTV}}

\begin{theorem}
  \label{thm:normgrowth}Let $T > 0$ and $\kappa > 0$ small. We can find an
  almost conserved energy $E^{(1)} (u)$ for which one has
  \[ \left| E^{(1)} (u (t)) - \frac{1}{2} \int | \partial_t u (t) |^2 \right|
     \lesssim C (u_0) (\| \partial_t u (t) \|^{\kappa}_{L^2} + 1) \quad
     \forall t \in [0, T] \]
  and which has growth
  \[ \underset{0 \leqslant t \leqslant T}{\sup} | E^{(1)} (u (t)) | \lesssim
     T^{8 + \kappa} . \]
\end{theorem}

This is based on the approach from {\cite{PTV}} and is a comparable result to
what was shown in {\cite{TVa}} and {\cite{TVb}} but uses the paracontrolled
approach from {\cite{GUZ}} and the Strichartz estimates as in {\cite{MZ}}.
Moreover, the growth of the norm is not shown in those papers.

\

For the second order energy that should control the norm $\| u \|_{L_{[0,
T]}^{\infty} \mathcal{D} (H)} \sim \| u^{\sharp} \|_{L_{[0, T]}^{\infty}
\mathcal{H}^2}$ or equivalenty $\| \partial_t u \|_{L_{[0, T]}^{\infty} L^2}
\sim \| \partial_t u^{\sharp} \|_{L_{[0, T]}^{\infty} L^2}$. We collect these
norm equivalences in a small lemma.

\begin{lemma}
  \label{lem:strongH}Let $u$ be a solution of \eqref{nls:strong} and
  $\Gamma^{- 1} u = u^{\sharp}$ solve \eqref{nls:strongsharp}. Then we have that
  the following bounds hold
  \begin{align}
    \| u \|_{L_{[0, T]}^{\infty} \mathcal{D} (H)} & \lesssim \| u^{\sharp}
    \|_{L_{[0, T]}^{\infty} \mathcal{H}^2} \lesssim \| u \|_{L_{[0,
    T]}^{\infty} \mathcal{D} (H)} \\
    \| \partial_t u \|_{L_{[0, T]}^{\infty} L^2} & \lesssim \| \partial_t
    u^{\sharp} \|_{L_{[0, T]}^{\infty} L^2} \lesssim \| \partial_t u
    \|_{L_{[0, T]}^{\infty} L^2} \\
    & \lesssim \| u \|_{L_{[0, T]}^{\infty} \mathcal{D} (H)} + C (E (u_0)) 
    \label{bound:dtH}\\
    \| H u \|_{L_{[0, T]}^{\infty} \mathcal{H}^{- \alpha}} & \lesssim \|
    u^{\sharp} \|_{L_{[0, T]}^{\infty} \mathcal{H}^{2 - \alpha}} \lesssim \|
    \partial_t u \|_{L_{[0, T]}^{\infty} \mathcal{H}^{- \alpha}} + C (E (u_0))
    \text{ for } \alpha \in (0, 1)  \label{bound:neg1}\\
    \| \partial_t u \|_{L_{[0, T]}^{\infty} \mathcal{H}^{- \alpha}} & \lesssim
    \| \partial_t u^{\sharp} \|_{L_{[0, T]}^{\infty} \mathcal{H}^{- \alpha}}
    \lesssim \| H u \|_{L_{[0, T]}^{\infty} \mathcal{H}^{- \alpha}} + C (E
    (u_0))  \text{ for } \alpha \in (0, 1)   \label{bound:neg2}\\
    \| u^{\sharp} \|_{L_{[0, T]}^{\infty} \mathcal{H}^{\gamma}} & \lesssim C
    (E (u_0)) \| u^{\sharp} \|^{\gamma - 1}_{L^{\infty} \mathcal{H}^2} \quad
    \text{for } \gamma \in [1, 2].  \label{interpolateH}
  \end{align}
  Moreover, the difference
  \begin{equation}
    \partial_t u - \partial_t u^{\sharp} = (\Gamma - 1) \partial_t u^{\sharp}
  \end{equation}
  satisfies the bound
  \begin{equation}
    \| (\Gamma - 1) \partial_t u^{\sharp} \|_{\mathcal{H}^{\alpha}} \lesssim
    \| \partial_t u^{\sharp} \|_{\mathcal{H}^{\alpha - 1 + \kappa}} \quad
    \text{for } \alpha < 1 \quad \text{and } \kappa < 1 - \alpha
    \label{diffdt}
  \end{equation}
  similarly we have
  \begin{equation}
    \| (\Gamma - 1) u^{\sharp} \|_{L_{[0, T]}^{\infty} \mathcal{C}^{1 - 2
    \kappa}} \lesssim \| u^{\sharp} \|_{L_{[0, T]}^{\infty} \mathcal{C}^{-
    \kappa}} \lesssim \| u^{\sharp} \|_{L_{[0, T]}^{\infty} \mathcal{H}^1}
    \lesssim C (E (u_0)) \quad \text{for } \kappa > 0. \label{diffusharp}
  \end{equation}
\end{lemma}

\begin{proof}
  The first two equivalences follow from the properties of the $\Gamma$ map
  from Lemma \ref{lem:Gamma}. To get \eqref{bound:dtH}, we use the equation to
  get the bound
  \[ \| \partial_t u \|_{L_{[0, T]}^{\infty} L^2} \leqslant \| H u \|_{L_{[0,
     T]}^{\infty} L^2} + \| u \|^{2 n + 1}_{L_{[0, T]}^{\infty} L^{2 (2 n +
     1)}} \lesssim \| H u \|_{L_{[0, T]}^{\infty} L^2} + E^{\frac{2 n + 1}{2}}
     (u_0) \]
  where the last step follows from the embedding $\mathcal{D} \left( \sqrt{-
  H} \right) \hookrightarrow L^{2 (2 n + 1)}$ , see Lemma \ref{lem:Gamma}, and
  the energy conservation in Proposition \ref{prop:gronwall}. The two bounds
  \eqref{bound:neg1}, \eqref{bound:neg2} follow as above combined with Theorem
  \ref{thm:Gammaemb} and \eqref{interpolateH} follows from
  $\mathcal{H}^{\gamma}$ interpolation and the energy bounding $\| u^{\sharp}
  \|_{L_{[0, T]}^{\infty} \mathcal{H}^1} .$
  
  Finally, \eqref{diffdt} and \eqref{diffusharp} follow from Lemma
  \ref{lem:Gamma}. 
\end{proof}

We next use the Strichartz estimate, Theorem \ref{thm:2dandstr}
\[ \| e^{i t H^{\sharp}} v \|_{L_{[0, 1]}^4 W^{\alpha, 4}} \lesssim \| v
   \|_{\mathcal{H}^{\alpha + \frac{1}{4} + \kappa}}  \text{for } \kappa > 0
   \text{ and } \alpha + \frac{1}{4} + \kappa \leqslant 2  \]
and obtain some bounds as a consequence.

\begin{lemma}
  \label{lem:strong:str}Let $u$ be a solution of \eqref{nls:strong} and
  $\Gamma^{- 1} u = u^{\sharp}$ solve \eqref{nls:strongsharp}. Then we have
  \begin{align}
    \| u \|_{L_{[0, 1]}^4 C^{\frac{1}{4} - \kappa}} \lesssim & C (E (u_0))
    \text{ for any } \kappa > 0  \label{str:u} \\
    \| | u |^m u \|_{L_{[0, 1]}^4 C^{\alpha}} \lesssim & C (E (u_0)) \| u
    \|_{L_{[0, 1]}^4 C^{\alpha + \kappa}} \quad \text{for } m \in \mathbb{N}
    \text{ and } \alpha + \kappa < 1  \label{polyu}\\
    & \text{in particular} \nonumber\\
    \| | u |^m u \|_{L_{[0, 1]}^4 C^{\frac{1}{4} - \kappa}} \lesssim & C (E
    (u_0)) \text{\quad for any } \kappa > 0 \\
    & \text{and} \nonumber\\
    \| \partial_t | u |^m \|_{L_{[0, 1]}^4 \mathcal{H}^{- \alpha}} \lesssim &
    C (E (u_0)) \| \partial_t u \|_{L_{[0, 1]}^{\infty} \mathcal{H}^{-
    \alpha}}  \text{ for } \alpha \in \left[ 0, \frac{1}{4} \right) 
    \label{polyut}\\
    \| u^{\sharp} \|_{L_{[0, 1]}^4 W^{\frac{7}{4} - \kappa, 4}} \lesssim & C(E(u_0)) (1 + \| \partial_t u \|_{L_{[0, 1]}^{\infty}
    L^2})  \text{ for } \kappa > 0 . \label{bound:pde74}
  \end{align}
\end{lemma}

\begin{proof}
  Using the Sobolev embedding $W^{\beta + \frac{1}{2} + \varepsilon, 4}
  \hookrightarrow C^{\beta}$ for $\varepsilon > 0$ and the Strichartz
  estimate, we get
  \begin{align*}
    \| u \|_{L_{[0, 1]}^4 C^{\frac{1}{4} - \kappa}} & \lesssim \| u \|_{L_{[0,
    1]}^4 W^{\frac{3}{4} - \frac{\kappa}{2}, 4}}\\
    & \lesssim \| u_0 \|_{\mathcal{H}^{1 - \frac{\kappa}{4}}} + \int^1_0 \| u
    | u |^{2 n} (s) \|_{\mathcal{H}^{1 - \frac{\kappa}{4}}} d s\\
    & \lesssim \| u_0 \|_{\mathcal{H}^{1 - \frac{\kappa}{4}}} + \int^1_0 \| u
    (s) \|_{\mathcal{H}^{1 - \tilde{\kappa}}}^{2 n + 1} d s\\
    & \lesssim \| u_0 \|_{\mathcal{D} \left( \sqrt{- H} \right)} + \| u
    \|_{L_{[0, 1]}^{\infty} \mathcal{D} \left( \sqrt{- H} \right)}^{2 n + 1}\\
    & \lesssim C (E (u_0))
  \end{align*}
  where $1 \gg \kappa \gg \tilde{\kappa} > 0$ are small enough and can be
  determined using the fractional Leibnitz inequality, Lemma
  \ref{lem:fracleib}. This proves \eqref{str:u}.
  
  For \eqref{polyu}, we use the Sobolev embedding, for every $\kappa >
  0$ there exists $p \gg 1$ s.t. $W^{\alpha + \kappa, p} \hookrightarrow
  C^{\alpha}$ and fractional Leibnitz, Lemma \ref{lem:fracleib},
  \[ \| | u |^m u \|_{L_{[0, 1]}^4 C^{\alpha}} \lesssim \| | u |^m u
     \|_{L_{[0, 1]}^4 W^{\alpha + \kappa, p}} \lesssim \| u \|^m_{L_{[0, 1]}^{\infty} L^{m p}} \| u
     \|_{L_{[0, 1]}^4 C^{\alpha + \kappa}}\lesssim C (E
     (u_0))\| u
     \|_{L_{[0, 1]}^4 C^{\alpha + \kappa}} \]
  giving \eqref{polyu}, so in particular $\| | u |^m u \|_{L_{[0, 1]}^4
  C^{\frac{1}{4} - \kappa}} \lesssim C (E (u_0))$ as claimed. This also
  implies immediately the bound \eqref{polyut} by the Leibnitz rule for functions/distributions.
  
  Finally, we want to prove \eqref{bound:pde74} which would be obvious in the
  classical case, but needs an additional argument since this is the first
  time we want to take more than one derivative in the Strichartz estimate. We
  observe that for general functions $f \in C_{[0, T]}^1 L^2$ we have by
  integrating by parts in time
  \begin{align*}
   \int^t_0 e^{-i (t - s) H^{\sharp}} f (s) d s &= \frac{1}{ i} e^{-i t
     H^{\sharp}} \int^t_0 (H^{\sharp})^{- 1} \left( \frac{d}{d s} e^{ i s
     H^{\sharp}} \right) f (s) d s \\
      &= \frac{1}{i} \int^t_0 e^{-i (t - s)
     H^{\sharp}} (H^{\sharp})^{- 1} \frac{d}{d s} f (s) d s + \frac{1}{ i}
     (H^{\sharp})^{- 1} (f (t) - e^{-i t H^{\sharp}} f (0)) . 
    \end{align*}
  Then, using the mild formulation for $u^{\sharp}$ (set $\Gamma^{- 1}
  ((\Gamma u^{\sharp}) | \Gamma u^{\sharp} |^{2 n}) = f$ for readability)
  \begin{align*}
    u^{\sharp} (t) & = e^{- i t H^{\sharp}} u^{\sharp}_0 + i \int^t_0 e^{- i
    (t - s) H^{\sharp}} f (s) d s\\
    & = e^{- i t H^{\sharp}} u^{\sharp}_0 + \int^t_0 e^{- i (t - s)
    H^{\sharp}} (H^{\sharp})^{- 1} \frac{d}{d s} f (s) d s - (H^{\sharp})^{-
    1} (f (t) - e^{i t H^{\sharp}} f (0))
  \end{align*}
  so we get the bound from the Strichartz estimate Theorem \ref{thm:2dandstr}
  and Lemma \ref{lem:Gamma}
  \begin{align*}
    \| u^{\sharp} \|_{L_{[0, 1]}^4 W^{\frac{7}{4} - \kappa, 4}}  \lesssim &
    \| u_0^{\sharp} \|_{\mathcal{H}^2} + \| (H^{\sharp})^{- 1} \Gamma
    \partial_t f \|_{L_{[0, 1]}^1 \mathcal{H}^2} + \| (H^{\sharp})^{- 1} f
    \|_{L_{[0, 1]}^4 W^{\frac{7}{4} - \kappa, 4}} + \\&+\| e^{i t H^{\sharp}}
    (H^{\sharp})^{- 1} \Gamma^{- 1} f (0) \|_{L_{[0, 1]}^4 W^{\frac{7}{4} -
    \kappa, 4}}\\
     \lesssim & \| u_0^{\sharp} \|_{\mathcal{H}^2} + C (E (u_0)) \|
    \partial_t u \|_{L_{[0, 1]}^{\infty} L^2} + \| u | u |^{2 n} \|_{L_{[0,
    1]}^{\infty} \mathcal{H}^{\frac{1}{4} - \kappa}} + \| u_0 | u_0 |^{2 n}
    \|_{L^2}\\
     \lesssim & C (E (u_0)) \| \partial_t u \|_{L_{[0, 1]}^{\infty} L^2} + C
    ( E(u_0) )\\
     \lesssim & C (E(u_0)) (1 + \| \partial_t u \|_{L_{[0, 1]}^{\infty} L^2})
  \end{align*}
  finishing the proof.
\end{proof}

\

The general approach, following {\cite{PTV}}, is to make a simple initial
ansatz $E^0 = \frac{1}{2} \int | \partial_t u |^2 $for the almost conserved
energy $E^{(1)}$ and then compute its time derivative and try to write
\[ \frac{d}{d t} E^0 = \frac{d}{d t} A + O ((E^0)^{1 -}), \]
so terms on the rhs should either be total time derivatives or be ``lower
order'' i.e. can be bounded by sublinear terms of $E^0 .$ Usually this will
result from interpolating between the conserved energy $E (u)$ which is
equivalent to the $\mathcal{H}^1$ norm of $u^{\sharp}$ and the almost
conserved energy $E^0$ which is comparable to the $\mathcal{H}^2$ norm of
$u^{\sharp}$ or equivalently the $L^2$ norm of $\partial_t u$, i.e. we will apply \eqref{interpolateH}.

\begin{proof}[of Theorem \ref{thm:normgrowth}]
  We make an ansatz for the almost conserved energy
  \[ E^0 (\partial_t u) = \frac{1}{2} \mathrm{Re} \int | \partial_t u |^2 =
     \frac{1}{2} (\partial_t u, \partial_t u) \]
  then we make the straightforward computation which is taking a time
  derivative and inserting the equation
  \begin{eqnarray*}
    \frac{d}{d t} E^0 (\partial_t u) & = & \mathrm{Re} \int \overline{\partial_t
    u} \partial^2_t u\\
    & = & - \mathrm{Re }\ i \int \overline{\partial_t u} \partial_t (H u - | u
    |^{2 n} u)\\
    & = & - \mathrm{Re }\ i \int \overline{\partial_t u} \partial_t (| u |^{2 n}
    u)\\
    & = & - \mathrm{Re}\ i \int \overline{\partial_t u}  (\partial_t u | u |^{2
    n} + u \partial_t | u |^{2 n - 2})\\
    & = & - \mathrm{Re}\ i \int \overline{\partial_t u} u \partial_t | u |^{2 n
    - 2},
  \end{eqnarray*}
  having used also the self-adjointness of $H$ in the second line and the
  realness of the first expression in the penultimate line . Further we insert
  the equation again and obtain
  \begin{eqnarray}
    \frac{d}{d t} E^0 (\partial_t u) & = & \mathrm{Re} \int \overline{(H u + | u
    |^{2 n} u)} u \partial_t | u |^{2 n - 2} \nonumber\\
    & = & \mathrm{Re} \int \overline{H u} u \partial_t | u |^{2 n - 2} +
    \mathrm{Re} \int | u |^{2 n + 2} \partial_t | u |^{2 n - 2} \nonumber\\
    & = & \mathrm{Re} \int \overline{H \Gamma u^{\sharp}} \Gamma u^{\sharp}
    \partial_t | \Gamma u^{\sharp} |^{2 n - 2} + (n - 1) \int | u |^{4 n - 2}
    \partial_t | u |^2 \nonumber\\
    & = & \mathrm{Re} \int \overline{H \Gamma u^{\sharp}} \Gamma u^{\sharp}
    \partial_t | \Gamma u^{\sharp} |^{2 n - 2} + \frac{n - 1}{2 n} \frac{d}{d
    t} \int | u |^{4 n} \nonumber\\
    & \backassign & (I) + \frac{n - 1}{2 n} \frac{d}{d t} \int | u |^{4 n} 
    \label{eqn:I}
  \end{eqnarray}
  having inserted $\Gamma u^{\sharp} = u$ and the definition of the real part
  for the first term .
  
  \
  
  Now, the main point is that, up to more regular terms, the term
  $\overline{u} H u + u \overline{H u}$ in $(I)$ is comparable to $\Delta |
  u^{\sharp} |^2$ and the $\partial_t | \Gamma u^{\sharp} |^{2 n - 2}$ should
  be replaced with $\partial_t | u^{\sharp} |^{2 n - 2}$ and then one can
  proceed similarly to {\cite{PTV}}.\\
  In other words, we aim to prove the bound
  \begin{align}
  |\mathrm{Re} \int \overline{H \Gamma u^{\sharp}} \Gamma u^{\sharp}
  \partial_t | \Gamma u^{\sharp} |^{2 n - 2} -\mathrm{Re} \int \overline{\Delta u^{\sharp}} u^{\sharp}
  \partial_t |  u^{\sharp} |^{2 n - 2} 
  | \lesssim  \left( \| \partial_t u \|^{\gamma}_{L_{[0, T]}^{\infty} L^2}
  + 1 \right)\label{ineq: strategy}
  \end{align}
  for some $0<\gamma<2$ since the secon
  \
  
  To begin with, we want to replace the time derivative $\partial_t | \Gamma
  u^{\sharp} |^{2 n - 2}$ by $\partial_t | u^{\sharp} |^{2 n - 2}$ in $(I)$.
  We bound the difference as follows
  \begin{align}
    (\overline{u} H u, (\partial_t | \Gamma u^{\sharp} |^{2 n - 2} -
    \partial_t | u^{\sharp} |^{2 n - 2}))  = & (\overline{u} H u, (\Gamma -
    1) \partial_t u^{\sharp} \overline{u} | u |^{2 n - 4}) +
    (\overline{u} H u, \partial_t u (| u |^{2 n - 2} - | u^{\sharp} |^{2 n -
    2})) \nonumber\\
     = & (H u, | u |^{2 n - 2} (\Gamma - 1) \partial_t u^{\sharp}) +
    (H u, \partial_t u (\Gamma -1 )u^{\sharp} (P^{(2 n - 3)} (u, u^{\sharp}))) +\nonumber\\&+
    (H u,(\Gamma - 1)\overline{u}^{\sharp} \tilde{P}^{(2 n - 3)} (u,
    u^{\sharp})),  \label{dtusharp:diff}
  \end{align}
  where we have written
  \[ | u |^{2 n - 2} - | u^{\sharp} |^{2 n - 2} = (\Gamma-1) u^{\sharp} P^{2 n -
     3} (u, u^{\sharp}) + (\Gamma-1) \overline{u}^{\sharp} \tilde{P}^{(2
     n - 3)} (u, u^{\sharp}) \]
  for some polynomials $P^{2 n - 3}, \tilde{P}^{2 n - 3}$ of degree $2 n - 3$
  in $u, \overline{u}, u^{\sharp}$ and $\overline{u}^{\sharp}$. Now we start
  by bounding the first term in \eqref{dtusharp:diff} integrated out in time on an
  interval $[0, T]$ for some $T \leqslant 1$
  \begin{eqnarray*}
    | | (H u, | u |^{2 n - 2} (\Gamma - 1) \partial_t u^{\sharp}) |
    |_{L_{[0, T]}^4} & \lesssim & \| | u |^{2 n - 2} H u \|_{L_{[0, T]}^4
    \mathcal{H}^{- \frac{1}{4} + \delta}} {{\| (\Gamma - 1) \partial_t
    u^{\sharp} \|_{L_{[0, T]}^{\infty} \mathcal{H}^{\frac{1}{4} + \delta}}} }
    \\
    & \lesssim & \| | u |^{2 n - 2} \|_{L_{[0, T]}^4 C^{\frac{1}{4} -
    \frac{\delta}{2}}} \| H u \|_{L_{[0, T]}^{\infty} \mathcal{H}^{-
    \frac{1}{4} + \delta}} {{\| (\Gamma - \tmop{id}) \partial_t u^{\sharp}
    \|_{L_{[0, T]}^{\infty} \mathcal{H}^{\frac{1}{4} + \delta}}} } \\
    & \lesssim & C (E (u_0)) \left( \| u^{\sharp} \|_{L_{[0, T]}^{\infty}
    \mathcal{H}^{\frac{7}{4} + \delta}} \left\| \partial_t u^{\sharp}
    \right\|_{{{L_{[0, T]}^{\infty} \mathcal{H}^{- \frac{3}{4} + 2 \delta}}} }
    + 1 \right)\\
    & \lesssim & C (E (u_0)) \left( \| u^{\sharp} \|_{L_{[0, T]}^{\infty}
    \mathcal{H}^{\frac{7}{4} + \delta}} \| u^{\sharp} \|_{L_{[0,
    T]}^{\infty} \mathcal{H}^{\frac{5}{4} + 2 \delta}}  + 1 \right)\\
    & \lesssim & C (E (u_0)) \left( \| u^{\sharp} \|_{L_{[0,
    T]}^{\infty} \mathcal{H}^2}^{1 + 3 \delta}  + 1 \right)
  \end{eqnarray*}
  having used Lemmas \ref{lem:strongH} and \ref{lem:strong:str}. Next we bound
  the second term in \eqref{dtusharp:diff}, the third being completely analogous,
  this term is worse in terms of regularity and we split off the worst terms
  using paraproducts, see the appendix for the definition and recall the
  convention $\preccurlyeq \assign \prec + \circ$ we have
  \begin{align*}
    (H u, \partial_t u (\Gamma -1 )u^{\sharp} (P^{(2 n - 3)} (u, u^{\sharp}))) = &
    (H u, (\partial_t u (\Gamma -1 )u^{\sharp}) \preccurlyeq (P^{(2 n - 3)} (u,
    u^{\sharp}))) +\\&+ (H u, (\partial_t u (\Gamma -1 )u^{\sharp}) \succ (P^{(2 n - 3)}
    (u, u^{\sharp})))\\
     = & (H u, (\partial_t u (\Gamma -1 )u^{\sharp}) \preccurlyeq (P^{(2 n - 3)}
    (u, u^{\sharp}))) \\&+ (H u, (\partial_t u \preccurlyeq (\Gamma -1 )u^{\sharp})
    \succ (P^{(2 n - 3)} (u, u^{\sharp}))) \\&+ (H u, (\partial_t u \succ (\Gamma -1 )u^{\sharp}) \succ (P^{(2 n - 3)} (u, u^{\sharp})))
  \end{align*}
  which we bound as
  \begin{align*}
    | | \ldots | |_{L_{[0, T]}^4}  \lesssim & \| H u \|_{L_{[0, T]}^{\infty}
    \mathcal{H}^{- \frac{\delta}{4}}} \| \partial_t u \|_{L_{[0, T]}^{\infty}
    \mathcal{H}^{- \frac{1}{4} + \frac{\delta}{2}}} \| (\Gamma -1 )u^{\sharp}
    \|_{L_{[0, T]}^{\infty} C^{\frac{1}{4} - \frac{\delta}{4}}} \| P^{(2 n -
    3)} (u, u^{\sharp}) \|_{L_{[0, T]}^4 C^{\frac{1}{4} - \frac{\delta}{4}}} +\\&+
    \| (H u, (\partial_t u \preccurlyeq (\Gamma -1 )u^{\sharp}) \succ (P^{(2 n - 3)}
    (u, u^{\sharp}))) \|_{L_{[0, T]}^4} + \\&+ \| (H u, (\partial_t u \succ (\Gamma -1 )u^{\sharp}) \succ (P^{(2 n - 3)} (u, u^{\sharp}))) \|_{L_{[0, T]}^4}\\
     \lesssim & C (E(u_0)) \left( \| \partial_t u \|^{\frac{7}{4} +
    \frac{\delta}{4}}_{L_{[0, T]}^{\infty} L^2} + 1 \right) +\\&+ \| (\Gamma -1 )u^{\sharp} \|_{L_{[0, T]}^{\infty} C^{1 - \delta}} \| \partial_t u
    \|_{L_{[0, T]}^{\infty} L^2} \| P^{(2 n - 3)} (u, u^{\sharp}) \|_{L_{[0,
    T]}^4 L^{\infty}} \| H u \|_{L_{[0, T]}^{\infty} \mathcal{H}^{- 1 +
    \delta}} +\\&+ \| (H u, (\partial_t u \succ (\Gamma -1 )u^{\sharp}) \succ (P^{(2 n -
    3)} (u, u^{\sharp}))) \|_{L_{[0, T]}^4}\\
     \lesssim & C (E(u_0)) \left( \| \partial_t u \|^{\frac{7}{4} +
    \frac{\delta}{4}}_{L_{[0, T]}^{\infty} L^2} + 1 \right) + \| (\star)
    \|_{L_{[0, T]}^4} .
  \end{align*}
  Naively, the term $(\star)$ gives us a bound no better than $C (u_0) \|
  \partial_t u \|_{L_{[0, T]}^{\infty} L^2} \| H u \|_{L_{[0, T]}^{\infty}
  L^2} \sim \| \partial_t u \|^2_{L_{[0, T]}^{\infty} L^2}$ which would be
  enough if we wanted to apply Gronwall, but we are able to deal with it
  separately to get an improved bound, namely with only powers of $\|
  \partial_t u \|_{L^{\infty} L^2} $ strictly smaller than $2$ appearing.
  
  \
  
  By applying Corollary \ref{cor:adj2} to this term, we may bound it as
  \begin{align*}
  \| (\star) \|_{L_{[0, T]}^4} &\lesssim \| H u \|_{L_{[0, T]}^{\infty}
     \mathcal{H}^{- \frac{1}{8} + \delta}} \| \partial_t u \|_{L_{[0,
     T]}^{\infty} \mathcal{H}^{- \frac{1}{8} + \delta}} \| u - u^{\sharp}
     \|_{L_{[0, T]}^{\infty} C^{\frac{1}{4} - \frac{\delta}{2}}} \| P^{(2 n -
     3)} (u, u^{\sharp}) \|_{L_{[0, T]}^4 C^{\frac{1}{4} - \frac{\delta}{2}}}\\
     &\lesssim C (E(u_0)) \left( \| \partial_t u \|^{\frac{7}{4} + 2
     \delta}_{L_{[0, T]}^{\infty} L^2} + 1 \right) 
    \end{align*}
  This means in \eqref{eqn:I} the term $(I)$ we can replaced $\mathrm{Re} \int
  \overline{H \Gamma u^{\sharp}} \Gamma u^{\sharp} \partial_t | \Gamma
  u^{\sharp} |^{2 n - 2}$ by $\mathrm{Re} \int \overline{H \Gamma u^{\sharp}}
  \Gamma u^{\sharp} \partial_t | u^{\sharp} |^{2 n - 2}$ up to an error $\sim
  \left( \| \partial_t u \|^{\frac{7}{4} + 2 \delta}_{L_{[0, T]}^{\infty} L^2}
  + 1 \right)$.
  
  \
  
  Next, we want to replace $\mathrm{Re} \int \overline{H \Gamma u^{\sharp}}
  \Gamma u^{\sharp} \partial_t | u^{\sharp} |^{2 n - 2}$ by $\mathrm{Re} \int
  \overline{\Delta u^{\sharp}} u^{\sharp} \partial_t | u^{\sharp} |^{2 n - 2}$
  in $(I)$. In order to do this, we rewrite it as
  \begin{align*}
    &\mathrm{Re} \int \overline{H \Gamma u^{\sharp}} \Gamma u^{\sharp} \partial_t
    | u^{\sharp} |^{2 n - 2} - \mathrm{Re} \int \overline{\Delta u^{\sharp}}
    u^{\sharp} \partial_t | u^{\sharp} |^{2 n - 2}=\\
    &= \mathrm{Re} \int \overline{(H \Gamma - \Delta) u^{\sharp}} \Gamma
    u^{\sharp} \partial_t | u^{\sharp} |^{2 n - 2} - \mathrm{Re} \int
    \overline{\Delta u^{\sharp}}  (\Gamma - 1) u^{\sharp} \partial_t |
    u^{\sharp} |^{2 n - 2} 
  \end{align*}
  now we bound the first of these terms as, using Lemmas \ref{lem:Gamma},
  \ref{lem:strongH} and \ref{lem:strong:str}
  \begin{align*}
    \left\| \mathrm{Re} \int \overline{(H \Gamma - \Delta) u^{\sharp}} \Gamma
    u^{\sharp} \partial_t | u^{\sharp} |^{2 n - 2} \right\|_{L_{[0, T]}^4} &
    \lesssim  \| u \|_{L_{[0, T]}^4 C^{\frac{1}{4} - \frac{\delta}{2}}} \| (H
    \Gamma - \Delta) u^{\sharp} \|_{L_{[0, T]}^{\infty}
    \mathcal{H}^{\frac{1}{4} - \delta}} \| \partial_t | u^{\sharp} |^{2 n - 2}
    \|_{L_{[0, T]}^{\infty} \mathcal{H}^{- \frac{1}{4} + \delta}}\\
    & \lesssim  \| u \|_{L_{[0, T]}^4 C^{\frac{1}{4} - \frac{\delta}{2}}} \|
    u^{\sharp} \|_{L_{[0, T]}^{\infty} \mathcal{H}^{\frac{5}{4} -
    \frac{\delta}{2}}} \| \partial_t | u^{\sharp} |^{2 n - 2} \|_{L_{[0,
    T]}^{\infty} \mathcal{H}^{- \frac{1}{4} + \delta}}\\
    & \lesssim  C (E (u_0)) \| u^{\sharp} \|^{\frac{3}{4} -
    \frac{\delta}{2}}_{L_{[0, T]}^{\infty} H^2} \| \partial_t u^{\sharp}
    \|_{L^{\infty} \mathcal{H}^{- \frac{1}{4} + \delta}}\\
    & \lesssim  C (E (u_0)) \left( \| \partial_t u \|^{1 +
    \frac{\delta}{2}}_{L^{\infty} L^2} + 1 \right)
  \end{align*}
  and similarly the second one
  \begin{align*}
    \| (\Delta u^{\sharp}, (\Gamma - 1) u^{\sharp} \partial_t |
    u^{\sharp} |^{2 n - 2}) \|_{L_{[0, T]}^4} & \lesssim  \| (\Delta
    u^{\sharp}, ((\Gamma - 1) u^{\sharp}) \succcurlyeq \partial_t |
    u^{\sharp} |^{2 n - 2}) \|_{L_{[0, T]}^4} +\\&+ \| (\Delta u^{\sharp}, (\Gamma
    - 1) u^{\sharp} \prec \partial_t | u^{\sharp} |^{2 n - 2})
    \|_{L_{[0, T]}^4}\\
    & \lesssim  \| \Delta u^{\sharp} \|_{L_{[0, T]}^{\infty} \mathcal{H}^{-
    1 + \delta}} \| (\Gamma - 1) u^{\sharp} \|_{L_{[0, T]}^{\infty}
    C^{1 - \delta}} \| \partial_t | u^{\sharp} |^{2 n - 2} \|_{L_{[0, T]}^4
    L^2} +\\&+ \| (\Delta u^{\sharp}, (\Gamma - 1) u^{\sharp} \prec
    \partial_t | u^{\sharp} |^{2 n - 2}) \|_{L_{[0, T]}^4}\\
    & \lesssim  C (E (u_0)) \| u^{\sharp} \|^{\delta}_{L_{[0, T]}^{\infty}
    \mathcal{H}^2} \| \partial_t u \|_{L_{[0, T]}^{\infty} L^2} +\\&+ \| \Delta
    u^{\sharp} \|_{L_{[0, T]}^{\infty} \mathcal{H}^{- \frac{1}{8} + \delta}}
    \| (\Gamma - 1) u^{\sharp} \|_{L_{[0, T]}^{\infty} C^{1 - \delta}}
    \| \partial_t | u^{\sharp} |^{2 n - 2} \|_{L_{[0, T]}^4 \mathcal{H}^{-
    \frac{1}{8} + \delta}}\\
    & \lesssim  C (E (u_0)) \left( 1 + \| \partial_t u \|^{1 +
    \delta}_{L_{[0, T]}^{\infty} L^2} + \| u^{\sharp} \|_{L_{[0, T]}^{\infty}
    \mathcal{H}^{\frac{15}{8} + \delta}} \| \partial_t u^{\sharp} \|_{L_{[0,
    T]}^{\infty} \mathcal{H}^{- \frac{1}{8} + \delta}} \right)\\
    & \lesssim  C (E (u_0)) \left( \| \partial_t u \|^{\frac{7}{4} + 2
    \delta}_{L^{\infty} L^2} + 1 \right) .
  \end{align*}
  So we have successfully replaced the term $(I)$ by the term $\mathrm{Re} \int
  \overline{\Delta u^{\sharp}} u^{\sharp} \partial_t | u^{\sharp} |^{2 n -
  2}$, which we deal with as in {\cite{PTV}}. Indeed, by the Leibnitz rule
  \begin{eqnarray*}
    \mathrm{Re} \int \overline{\Delta u^{\sharp}} u^{\sharp} \partial_t |
    u^{\sharp} |^{2 n - 2} & = & 2 \int (\overline{\Delta u^{\sharp}}
    u^{\sharp} + \overline{u^{\sharp}} \Delta u^{\sharp}) \partial_t |
    u^{\sharp} |^{2 n - 2}\\
    & = & 2 (n - 1) \int  (\Delta | u^{\sharp} |^2 - 2 | \nabla u^{\sharp}
    |^2) \partial_t | u^{\sharp} |^2 | u^{\sharp} |^{2 n - 4}
  \end{eqnarray*}
  and the first term we rewrite as
  \begin{eqnarray*}
    \int \Delta | u^{\sharp} |^2 \partial_t | u^{\sharp} |^2 | u^{\sharp} |^{2
    n - 4} & = & \int - \nabla | u^{\sharp} |^2 \partial_t \nabla | u^{\sharp}
    |^2 | u^{\sharp} |^{2 n - 4} - \int \nabla | u^{\sharp} |^2 \partial_t |
    u^{\sharp} |^2 \nabla | u^{\sharp} |^{2 n - 4}\\
    & = & - \frac{d}{d t} \left( \int  \frac{1}{2} | \nabla | u^{\sharp} |^2
    |^2 | u^{\sharp} |^{2 n - 4} \right) + \frac{(2 - n)}{2} \int  | \nabla |
    u^{\sharp} |^2 |^2 \partial_t | u^{\sharp} |^2 | u^{\sharp} |^{2 n - 4}
  \end{eqnarray*}
  where the first term will be included in the energy and the second one we
  bound integrated in time as

  \begin{eqnarray*}
    \left\| \int  | \nabla | u^{\sharp} |^2 |^2 \partial_t | u^{\sharp} |^2 |
    u^{\sharp} |^{2 n - 4} \right\|_{L_{[0, T]}^2} & \lesssim & \|  | \nabla |
    u^{\sharp} |^2 |^2 \|_{L_{[0, T]}^2 L^{\frac{2}{1 - \delta}}} \|
    \partial_t u^{\sharp} \|_{L_{[0, T]}^{\infty} L^2} \| | u^{\sharp} |^{2 n
    - 1} \|_{L_{[0, T]}^{\infty} L^{\frac{2}{\delta}}}\\
    & \lesssim & C (E (u_0)) \| \partial_t u \|_{L_{[0, T]}^{\infty} L^2} \|
    \nabla | u^{\sharp} |^2 \|^2_{L_{[0, T]}^4 L^{\frac{4}{1 - \delta}}}\\
    & \lesssim & C (E (u_0)) \| \partial_t u \|_{L_{[0, T]}^{\infty} L^2} \|
    \nabla u^{\sharp} \|^2_{L_{[0, T]}^4 L^{\frac{4}{1 - \delta / 2}}}\\
    & \lesssim & C (E (u_0)) \| \partial_t u \|_{L_{[0, T]}^{\infty} L^2} \|
    u^{\sharp} \|^2_{L_{[0, T]}^4 W^{1 + \kappa, 4}}\\
    & \lesssim & C (E (u_0)) \| \partial_t u \|_{L_{[0, T]}^{\infty} L^2} \|
    u^{\sharp} \|^{\frac{1}{2} - \tilde{\kappa}}_{L_{[0, T]}^4 W^{\frac{3}{4}
    - \kappa, 4}} \| u^{\sharp} \|^{\frac{1}{2} + \tilde{\kappa}}_{L_{[0,
    T]}^4 W^{\frac{7}{4} - \kappa, 4}}\\
    & \lesssim & C (E (u_0)) \left( \| \partial_t u \|^{\frac{3}{2} +
    \tilde{\kappa}}_{L^{\infty} L^2} + 1 \right)
  \end{eqnarray*}
  for small $1 \gg \delta, \kappa (\delta), \tilde{\kappa} (\kappa) > 0$.
  
  In total, we define (as usual, $u=\Gamma u^\sharp$)
  \begin{equation}
    E^{(1)} (u) (t) \assign \frac{1}{2} \int | \partial_t u (t) |^2 - \frac{(n
    - 1)}{2} \int  | \nabla | u^{\sharp} (t) |^2 |^2 | u^{\sharp} (t) |^{2 n -
    2} - \frac{n - 1}{2 n} \int | u (t) |^{4 n}
  \end{equation}
  for which we have for any $t \in [0, T], \kappa > 0$
  \begin{align*}
    \left| E^{(1)} (u) (t) - \frac{1}{2} \int | \partial_t u (t) |^2 \right| &
    \lesssim C (E (u_0)) (\| \nabla | u^{\sharp} |^2 (t) \|_{L^{2 + \kappa}} +
    1)\\
    & \lesssim C (E (u_0)) (\| u^{\sharp} (t) \|^2_{\mathcal{H}^{1 + 2
    \kappa}} + 1)\\
    & \lesssim C (E (u_0)) (\| u^{\sharp} (t) \|^{4 \kappa}_{\mathcal{H}^2} +
    1)
  \end{align*}
  meaning that, up to lower order terms, we can treat $E^{(1)} (u) (t)$ as
  controlling $\| \partial_t u \|^2_{L_{[0, t]}^{\infty} L^2}$.
  
  Collecting all the previous computations, we have for $s < t$ with $| s - t
  | \leqslant 1$ since the Strichartz estimates only hold in short intervals, we have
  \begin{eqnarray*}
    \left| \int | \partial_t u (t) |^2 - \int | \partial_t u (s) |^2 \right| &
    \lesssim & | E^{(1)} (u) (t) - E^{(1)} (u) (s) | + C (E (u_0)) \|
    \partial_t u \|^{4 \kappa}_{L_{[s, t]}^{\infty} L^2}\\
    & = & \left| \int^t_s \frac{d}{d t} E^{(1)} (u (\tau)) d \tau \right| + C
    (E (u_0)) \| \partial_t u \|^{4 \kappa}_{L_{[s, t]}^{\infty} L^2}\\
    & \lesssim & C (E (u_0)) \left( | t - s |^{\frac{1}{2}} \left( 1 +
    \| \partial_t u \|_{{L_{[s, t]}^{\infty} L^2}}^{\frac{7}{4} +
    	\kappa}  \right) + \| \partial_t u \|^{4 \kappa}_{L_{[s, t]}^{\infty}
    L^2} \right) .
  \end{eqnarray*}
  We also get
  \begin{align*}
    \underset{\tau \in [s, t]}{\sup} \int | \partial_t u (\tau) |^2 &
    \leqslant \int | \partial_t u (s) |^2 + C (E (u_0)) \left( | t - s
    |^{\frac{1}{2}} \left( 1 +\| \partial_t u \|_{{L_{[s, t]}^{\infty} L^2}}^{\frac{7}{4} +
    	\kappa}   \right) + \| \partial_t u \|^{4
    \kappa}_{L_{[s, t]}^{\infty} L^2} \right)\\
    & \leqslant | \partial_t u (s) |^2 + \left( 1 + \| \partial_t u \|_{{L_{[s, t]}^{\infty} L^2}}^{\frac{7}{4} +
    	\kappa}   \right) + C
    (E (u_0)) \| \partial_t u \|^{4 \kappa}_{L_{[s, t]}^{\infty} L^2}\\
    &\text{for } 
     | t - s | = \left( \frac{1}{1 + C (E (u_0))} \right)^2
    \backassign \tau_0 < 1
  \end{align*}
  To conclude, for a general $T > 0$ we fix $N = \lceil \tau_0 T \rceil$ and
  thus we have
  \begin{eqnarray*}
    \underset{\tau \in [0, T]}{\sup} \int | \partial_t u (\tau) |^2 & = &
    \underset{i = 1, \ldots N}{\max} \underset{\tau \in \left[ \frac{i}{N} T,
    \frac{i + 1}{N} T \right]}{\sup} \int | \partial_t u (\tau) |^2\\
    & \leqslant & \int | \partial_t u_0 |^2 + N \left( 1 + \| \partial_t    u\|_{L_{[0, T]}^{\infty} L^2}^{\frac{7}{4} + \kappa}  + C (E
    (u_0)) \| \partial_t u \|^{4 \kappa}_{L_{[0, T]}^{\infty} L^2} \right)\\
    & = & \\
    & \leqslant & \int | \partial_t u_0 |^2 + C (E (u_0)) N^{1 +
    \tilde{\kappa}} + C N^{8 + \tilde{\kappa}} + \frac{1}{2} {\| \partial_t u
    \|_{L_{[0, T]}^{\infty} L^2}^2} 
  \end{eqnarray*}
  for $\tilde{\kappa} (\kappa) > 0$ as small as we want. This finally implies
  \begin{eqnarray*}
    \underset{\tau \in [0, T]}{\sup} \int | \partial_t u (\tau) |^2 & \lesssim
    & E^{(1)} (u_0) + C (E (u_0)) + T^{8 + \tilde{\kappa}}
  \end{eqnarray*}
  so we have polynomial growth as claimed.
\end{proof}

\appendix\section{Paracontrolled Distributions and Besov spaces}

We collect some elementary results about paraproducts, see
{\cite{gubinelli2015paracontrolled}}, {\cite{allez_continuous_2015}},
{\cite{bahouri2011fourier}} for more details. We work on the $d -$dimensional
torus \
\[ \mathbb{T}^d =\mathbb{R}^d /\mathbb{Z}^d \]
in the current paper always $d = 2$.

The Sobolev space $\mathcal{H}^{\alpha} (\mathbb{T}^d)$ with index $\alpha \in
\mathbb{R}$ is defined as
\[ \mathcal{H}^{\alpha} (\mathbb{T}^d) \assign \left\{ u \in \mathcal{S}'
   (\mathbb{T}^d) : \left\| (1 - \Delta)^{\frac{\alpha}{2}} u \right\|_{L^2} <
   \infty \right\} . \]
Next, we recall the definition of Littlewood-Paley blocks. We denote by $\chi$
and $\rho$ two non-negative smooth and compactly supported radial functions
$\mathbb{R}^d \rightarrow \mathbb{C}$ such that
\begin{enumerateroman}
  \item The support of $\chi$ is contained in a ball and the support of $\rho$
  is contained in an annulus $\{ x \in \mathbb{R}^d : a \leqslant | x |
  \leqslant b \}$
  
  \item For all $\xi \in \mathbb{R}^d$, $\chi (\xi) + \underset{j \geqslant
  0}{\sum} \rho (2^{- j} \xi) = 1$;
  
  \item For $j \geqslant 1$, $\chi (\cdummy) \rho (2^{- j} \cdummy) = 0$ and
  $\rho (2^{- j} \cdummy) \rho (2^{- i} \cdummy) = 0$ for $| i - j | > 1$.
\end{enumerateroman}
The Littlewood-Paley blocks $(\Delta_j)_{j \geqslant - 1}$ associated to $f
\in \mathcal{S}' (\mathbb{T}^d)$ are defined by
\[ \Delta_{- 1} f \assign \mathcal{F}^{- 1} \chi \mathcal{F}f \text{and }
   \Delta_j f \assign \mathcal{F}^{- 1} \rho (2^{- j} \cdummy) \mathcal{F}f
   \text{for} j \geqslant 0. \]
We also set, for $f \in \mathcal{S}' (\mathbb{T}^d)$ and $j \geqslant - 1$
\[ S_j f \assign \underset{i = - 1}{\overset{j - 1}{\sum}} \Delta_i f. \]
Then the \ Besov space with parameters $p, q \in [1, \infty), \alpha \in
\mathbb{R}$ can now be defined as \
\[ B_{p, q}^{\alpha} (\mathbb{T}^d) \assign \{ u \in \mathcal{S}'
   (\mathbb{T}^d) : \| u \|_{B_{p, q}^{\alpha}} < \infty \}, \]
where the norm is defined as
\[ \| u \|_{B_{p, q}^{\alpha}} \assign \left( \underset{k \geqslant - 1}{\sum}
   ((2^{\alpha k} \| \Delta_k u \|_{L^p})^q) \right)^{\frac{1}{q}}, \]
with the obvious modification for $q = \infty .$ We also define the
\textit{Besov-H{\"o}lder} spaces
\[ \mathcal{C}^{\alpha} \assign B_{\infty \infty}^{\alpha} \]
which for $\alpha \in (0, 1)$ agree with the usual H{\"o}lder spaces
$C^{\alpha} .$

Using this notation, we can formally decompose the product $f \cdummy g$ of
two distributions $f$ and $g$ as \
\[ f \cdummy g = f \prec g + f \circ g + f \succ g, \]
where
\[ f \prec g \assign \underset{j \geqslant - 1}{\sum} S_{j - 1} f \Delta_j g
   \quad \text{and\quad} f \succ g \assign \underset{j \geqslant - 1}{\sum}
   \Delta_j f S_{j - 1} g \]
are referred to as the \textit{paraproducts}, whereas
\[ f \circ g \assign \underset{j \geqslant - 1}{\sum} \underset{| i - j |
   \leqslant 1}{\sum} \Delta_i f \Delta_j g \]
is called the \textit{resonant product}. An important point is that the
paraproduct terms are always well defined whatever the regularity of $f$ and
$g$. The resonant product, on the other hand, is a priori only well defined if
the sum of their regularities is positive. We collect some results.

\begin{lemma}
  \label{lem:para}[cf.Theorem 3.17 {\cite{mourrat2017global}}]Let $\alpha,
  \alpha_1, \alpha_2 \in \mathbb{R}$ and $p, p_1, p_2, q \in [1, \infty]$ be
  such that
  \[ \alpha_1 \neq 0 \quad \alpha = (\alpha_1 \wedge 0) + \alpha_2 \quad
     \text{and} \quad \frac{1}{p} = \frac{1}{p_1} + \frac{1}{p_2} . \]
  Then we have the bound
  \[ \| f \prec g \|_{B_{p, q}^{\alpha}} \lesssim \| f \|_{B_{p_1,
     \infty}^{\alpha_1}} \| g \|_{{B^{\alpha_2}_{p_2, q}} } \]
  and in the case where $\alpha_1 + \alpha_2 > 0$ we have the bound
  \[ \| f \circ g \|_{B_{p, q}^{\alpha_1 + \alpha_2}} \lesssim \| f
     \|_{B_{p_1, \infty}^{\alpha_1}} \| g \|_{{B^{\alpha_2}_{p_2, q}} } . \]
\end{lemma}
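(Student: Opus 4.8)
The plan is to run the classical Bony paraproduct argument: estimate each Littlewood--Paley block $\Delta_k$ of $f\prec g$ and of $f\circ g$ in $L^p$, and then sum in $\ell^q_k$. The two bounds differ only through the Fourier-support geometry of the dyadic pieces, which determines in which direction the resulting frequency sum runs; everything else is bookkeeping with the Littlewood--Paley calculus recalled in the appendix.

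For the paraproduct I would first recall that, for each $j$, the piece $S_{j-1}f\,\Delta_j g$ has Fourier support in an annulus of size $\sim 2^j$ (a ball of radius $\lesssim 2^{j-2}$ superposed with an annulus of size $\sim 2^j$), so that $\Delta_k(f\prec g)$ receives contributions only from indices $j$ with $|j-k|\le N_0$, where $N_0$ is a fixed constant depending on the supports of $\chi,\rho$. Using that the blocks $\Delta_k$ are bounded on $L^p$ uniformly in $k$ (Young's inequality for the rescaled convolution kernel, valid on $\mathbb{T}^d$ and $\mathbb{R}^d$ alike) together with H\"older's inequality for $\tfrac1p=\tfrac1{p_1}+\tfrac1{p_2}$, one gets $\|\Delta_k(f\prec g)\|_{L^p}\lesssim\sum_{|j-k|\le N_0}\|S_{j-1}f\|_{L^{p_1}}\|\Delta_j g\|_{L^{p_2}}$. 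The one genuinely important step is the bound $\|S_{j-1}f\|_{L^{p_1}}\lesssim\sum_{i\le j-2}2^{-i\alpha_1}\|f\|_{B^{\alpha_1}_{p_1,\infty}}\lesssim 2^{-j(\alpha_1\wedge0)}\|f\|_{B^{\alpha_1}_{p_1,\infty}}$, where the geometric series converges precisely because $\alpha_1\neq0$ --- this is the only use of that hypothesis, and for $\alpha_1=0$ one would instead pick up a factor $\sim j$. Plugging this in, multiplying by $2^{k\alpha}$ with $\alpha=(\alpha_1\wedge0)+\alpha_2$, noting that $2^{k\alpha}2^{-j(\alpha_1\wedge0)}\sim 2^{j\alpha_2}$ on the range $|j-k|\le N_0$, and taking $\ell^q_k$-norms, a discrete Young inequality (convolution against a finitely supported sequence) yields $\|f\prec g\|_{B^\alpha_{p,q}}\lesssim\|f\|_{B^{\alpha_1}_{p_1,\infty}}\|g\|_{B^{\alpha_2}_{p_2,q}}$.

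For the resonant product I would write $f\circ g=\sum_j\sum_{|i-j|\le1}\Delta_i f\,\Delta_j g$ and record the crucial difference from the paraproduct case: each inner summand has Fourier support merely in a \emph{ball} of radius $\sim 2^j$ (two frequencies of comparable size can cancel), so now $\Delta_k(f\circ g)$ picks up \emph{all} $j\ge k-N_0$. Setting $\widetilde\Delta_j:=\Delta_{j-1}+\Delta_j+\Delta_{j+1}$, H\"older gives $\|\Delta_k(f\circ g)\|_{L^p}\lesssim\sum_{j\ge k-N_0}\|\Delta_j f\|_{L^{p_1}}\|\widetilde\Delta_j g\|_{L^{p_2}}$. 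Writing $a_j:=2^{j\alpha_1}\|\Delta_j f\|_{L^{p_1}}\le\|f\|_{B^{\alpha_1}_{p_1,\infty}}$ and $b_j:=2^{j\alpha_2}\|\widetilde\Delta_j g\|_{L^{p_2}}$ (so that $\|(b_j)\|_{\ell^q}\lesssim\|g\|_{B^{\alpha_2}_{p_2,q}}$), one obtains $2^{k(\alpha_1+\alpha_2)}\|\Delta_k(f\circ g)\|_{L^p}\lesssim\|f\|_{B^{\alpha_1}_{p_1,\infty}}\sum_{j\ge k-N_0}2^{(k-j)(\alpha_1+\alpha_2)}b_j$. Since $\alpha_1+\alpha_2>0$ the kernel $n\mapsto 2^{-n(\alpha_1+\alpha_2)}$ restricted to $n\ge-N_0$ lies in $\ell^1$, so a last application of Young's inequality in $\ell^q_k$ finishes the bound; this is the only place the hypothesis $\alpha_1+\alpha_2>0$ enters.

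I do not expect any real obstacle here: the result is standard and the proof is purely a matter of organising the Littlewood--Paley support conditions. The only points requiring a little care are the two summability thresholds ($\alpha_1\neq0$ for the low-frequency factor $S_{j-1}f$ in the paraproduct, $\alpha_1+\alpha_2>0$ for the high-frequency tail in the resonant product) and keeping the overlap constant $N_0$ --- which depends only on the fixed profiles $\chi,\rho$ --- uniform throughout; on the torus the uniform $L^p$-boundedness of the $\Delta_k$ and the support geometry are inherited from $\mathbb{R}^d$ in the usual way.
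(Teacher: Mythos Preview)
Your argument is correct and is precisely the standard Bony--Meyer computation. Note, however, that the paper does not give its own proof of this lemma: it is stated in the appendix as a quoted result (``cf.\ Theorem 3.17 \cite{mourrat2017global}'') and used as a black box throughout, so there is nothing to compare against beyond the cited references, which carry out essentially the same Littlewood--Paley bookkeeping you have written.
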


\begin{remark}
  For the majority of the paper we care only about the case where $p = p_2 = q
  = 2$ and $p_1 = \infty$.
\end{remark}

\begin{lemma}
  \label{lem:bernstein}[Bernstein's inequality] Let $\mathcal{A}$ be an
  annulus and $\mathcal{B}$ be a ball. For any $k \in \mathbb{N}, \lambda >
  0,$and $1 \leqslant p \leqslant q \leqslant \infty$ we have
  \begin{enumerate}
    \item if $u \in L^p (\mathbb{T}^d) $is such that $\mathrm{supp}
    (\mathcal{F}u) \subset \lambda \mathcal{B}$ then
    \[ \underset{\mu \in \mathbb{N}^d : | \mu | = k}{\max} \| \partial^{\mu} u
       \|_{L^q} \lesssim_k \lambda^{k + d \left( \frac{1}{p} - \frac{1}{q}
       \right)} \| u \|_{L^p} \]
    \item if $u \in L^p (\mathbb{T}^d) $is such that $\mathrm{supp}
    (\mathcal{F}u) \subset \lambda \mathcal{A}$ then
    \[ \lambda^k \| u \|_{L^p} \lesssim_k \underset{\mu \in \mathbb{N}^d : |
       \mu | = k}{\max} \| \partial^{\mu} u \|_{L^p} . \]
  \end{enumerate}
\end{lemma}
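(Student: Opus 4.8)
The plan is to run the classical argument (as in \cite{bahouri2011fourier}): realise the operation $u\mapsto\partial^\mu u$ in (1) and the operation $\partial^\mu u\mapsto u$ in (2) as convolutions against rescaled Schwartz kernels, and then apply Young's convolution inequality. Since the statement is posed on $\mathbb{R}^d$ there is no periodicity to worry about; the only real work is tracking the scaling exponents.

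For (1), I would fix once and for all a cutoff $\phi\in C_c^\infty(\mathbb{R}^d)$ with $\phi\equiv 1$ on the ball $\mathcal{B}$. Then for $u$ with $\mathrm{supp}\,\mathcal{F}u\subset\lambda\mathcal{B}$ and $|\mu|=k$ one has $\widehat{\partial^\mu u}(\xi)=(i\xi)^\mu\phi(\xi/\lambda)\widehat u(\xi)$, hence $\partial^\mu u=g_\mu^\lambda * u$ with $g_\mu^\lambda:=\mathcal{F}^{-1}[(i\cdot)^\mu\phi(\cdot/\lambda)]$. A change of variables in the inverse Fourier integral gives the scaling relation $g_\mu^\lambda(x)=\lambda^{k+d}g_\mu(\lambda x)$, where $g_\mu:=\mathcal{F}^{-1}[(i\cdot)^\mu\phi]$ is a fixed Schwartz function. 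Choosing $r\in[1,\infty]$ by $\tfrac1r:=1-\tfrac1p+\tfrac1q$ (which lies in $[0,1]$ precisely because $p\le q$), Young's inequality yields $\|\partial^\mu u\|_{L^q}\le\|g_\mu^\lambda\|_{L^r}\|u\|_{L^p}$, and the scaling relation gives $\|g_\mu^\lambda\|_{L^r}=\lambda^{k+d(1-1/r)}\|g_\mu\|_{L^r}=\lambda^{k+d(1/p-1/q)}\|g_\mu\|_{L^r}$. Taking the maximum over $|\mu|=k$ and absorbing $\max_{|\mu|=k}\|g_\mu\|_{L^r}$ (which depends only on $d$, $k$, $\phi$) into the implicit constant finishes this part; the endpoint $r=\infty$ is the case $p=1$, $q=\infty$ and uses $\|f*g\|_{L^\infty}\le\|f\|_{L^\infty}\|g\|_{L^1}$.

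For (2), the case $k=0$ is trivial, so I would assume $k\ge 1$. Here the key algebraic input is the multinomial identity $|\xi|^{2k}=\big(\textstyle\sum_{j=1}^d\xi_j^2\big)^k=\sum_{|\mu|=k}\tfrac{k!}{\mu!}\xi^{2\mu}$, so that away from the origin $1=\sum_{|\mu|=k}\tfrac{k!}{\mu!}\xi^\mu\cdot\tfrac{\xi^\mu}{|\xi|^{2k}}$. Fixing $\Theta\in C_c^\infty(\mathbb{R}^d\setminus\{0\})$ with $\Theta\equiv 1$ on the annulus $\mathcal{A}$, for $u$ with $\mathrm{supp}\,\mathcal{F}u\subset\lambda\mathcal{A}$ I may insert $\Theta(\xi/\lambda)$ without changing $u$ and decompose $\widehat u(\xi)=\sum_{|\mu|=k}\tfrac{k!}{\mu!\,i^{k}}\,\lambda^{-k}\,m_\mu(\xi/\lambda)\,\widehat{\partial^\mu u}(\xi)$, where $m_\mu(\xi):=\xi^\mu\Theta(\xi)/|\xi|^{2k}\in C_c^\infty(\mathbb{R}^d\setminus\{0\})$, the factor $\lambda^{-k}$ recording the $(-k)$-homogeneity of $\xi\mapsto\xi^\mu/|\xi|^{2k}$. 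Since $\mathcal{F}^{-1}[m_\mu(\cdot/\lambda)](x)=\lambda^d(\mathcal{F}^{-1}m_\mu)(\lambda x)$ with $\mathcal{F}^{-1}m_\mu$ Schwartz, the $L^1$ norm of $\mathcal{F}^{-1}[m_\mu(\cdot/\lambda)]$ is independent of $\lambda$, so $\|f*g\|_{L^p}\le\|f\|_{L^1}\|g\|_{L^p}$ gives $\|u\|_{L^p}\lesssim_k\lambda^{-k}\max_{|\mu|=k}\|\partial^\mu u\|_{L^p}$, which is the claim after multiplying by $\lambda^k$.

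I do not anticipate a genuine obstacle: both parts collapse to Young's inequality applied to kernels of the form $\lambda^{\#}\,(\text{fixed Schwartz function})(\lambda\,\cdot)$. The only points requiring care are the bookkeeping of scaling exponents, so that they come out to exactly $d(1/p-1/q)$ in (1) and $-k$ in (2), and fixing the cutoffs $\phi,\Theta$ at the outset so that all constants depend only on $d$ and $k$ (and not on $\lambda$ or $u$), as the statement demands.
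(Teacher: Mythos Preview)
Your argument is correct and is precisely the standard proof from \cite{bahouri2011fourier}. The paper itself does not supply a proof of this lemma: it is recorded in the appendix as a classical fact, so there is nothing further to compare.
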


\

\begin{lemma}
  \label{lem:besovem}[Besov embedding] Let $\alpha < \beta \in \mathbb{R}, $
  $q_1 \leqslant q_2$, and $p > r \in [1, \infty]$ be such that
  \[ \beta = \alpha + d \left( \frac{1}{r} - \frac{1}{p} \right), \]
  then we have the following bound
  \[ \| f \|_{B_{p, q_2}^{\alpha} (\mathbb{T}^d)} \lesssim \| f \|_{{B_{r,
     q}^{\beta}}_1 (\mathbb{T}^d)} . \]
\end{lemma}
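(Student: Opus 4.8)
The plan is to prove the embedding block by block using Bernstein's inequality (Lemma~\ref{lem:bernstein}). Recall that for $j\geq0$ the Littlewood--Paley block $\Delta_j f$ has Fourier support in a dyadic annulus $2^j\mathcal{A}$ for a fixed annulus $\mathcal{A}$, while $\Delta_{-1}f$ has Fourier support in a fixed ball $\mathcal{B}$. Since $p\geq r$, the ball case (part~1) of Lemma~\ref{lem:bernstein} applied with $k=0$ and $\lambda\sim 2^j$ (resp.\ $\lambda\sim1$ when $j=-1$) gives
\[
  \|\Delta_j f\|_{L^p}\lesssim 2^{jd\left(\frac1r-\frac1p\right)}\|\Delta_j f\|_{L^r},\qquad j\geq-1,
\]
with an implicit constant independent of $j$ (for $j=-1$ the prefactor is merely an absolute constant).

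Next I would multiply by $2^{\alpha j}$ and insert the scaling relation $\beta=\alpha+d\left(\frac1r-\frac1p\right)$ from the hypothesis, obtaining
\[
  2^{\alpha j}\|\Delta_j f\|_{L^p}\lesssim 2^{\left(\alpha+d\left(\frac1r-\frac1p\right)\right)j}\|\Delta_j f\|_{L^r}=2^{\beta j}\|\Delta_j f\|_{L^r}
\]
for every $j\geq-1$. Taking the $\ell^q$ norm in $j$ on both sides and recalling the definitions of the Besov norms then yields
\[
  \|f\|_{B^{\alpha}_{p,q}}=\Big\|\big(2^{\alpha j}\|\Delta_j f\|_{L^p}\big)_{j\geq-1}\Big\|_{\ell^q}\lesssim \Big\|\big(2^{\beta j}\|\Delta_j f\|_{L^r}\big)_{j\geq-1}\Big\|_{\ell^q}=\|f\|_{B^{\beta}_{r,q}},
\]
which is the claim; the case $q=\infty$ is identical with the supremum replacing the sum.

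I do not expect a genuine obstacle here, as this is a classical fact. The only points requiring a little care are the low-frequency block $j=-1$, which must be treated with the ball case rather than the annulus case of Bernstein, and the fact that Lemma~\ref{lem:bernstein} as stated lives on $\mathbb{R}^d$: on $\mathbb{T}^d$ one recovers the same inequality either by convolving against the periodised Littlewood--Paley kernel (whose $L^1$ norm is bounded uniformly in the frequency scale) or by a standard transference argument, so the proof goes through verbatim for $\mathbb{Y}^d\in\{\mathbb{T}^d,\mathbb{R}^d\}$.
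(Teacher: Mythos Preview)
Your argument is correct and is the standard proof of this classical embedding: apply Bernstein's inequality to each Littlewood--Paley block to trade integrability for regularity, then take the $\ell^q$ norm in $j$. The paper does not actually supply a proof of Lemma~\ref{lem:besovem}; it is simply recorded in the appendix as a known fact (with the proof implicitly deferred to the references, e.g.\ \cite{bahouri2011fourier}), so there is nothing to compare against beyond noting that your proof is exactly the one those references give.
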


\begin{proposition}
  \label{prop:commu}[Commutator Lemma, Proposition 4.3 in
  {\cite{allez_continuous_2015}}]
  
  Given $\alpha \in (0, 1)$, $\beta, \gamma \in \mathbb{R}$ such that $\beta +
  \gamma < 0$ and $\alpha + \beta + \gamma > 0$, the following trilinear
  operator $C$ defined for any smooth functions $f, g, h$ by
  \[ C (f, g, h) \assign (f \prec g) \circ h - f (g \circ h) \]
  can be extended continuously to the product space $\mathcal{H}^{\alpha}
  \times \mathcal{C}^{\beta} \times \mathcal{C}^{\gamma}$. Moreover, we have
  the following bound
  \[ || C (f, g, h) ||_{\mathcal{H}^{\alpha + \beta + \gamma - \delta}}
     \lesssim || f ||_{\mathcal{H}^{\alpha}} || g ||_{\mathcal{C}^{\beta}} ||
     h ||_{\mathcal{C}^{\gamma}} \]
  for all $f \in \mathcal{H}^{\alpha}$, $g \in \mathcal{C}^{\beta}$ and $h \in
  \mathcal{C}^{\gamma}$, and every $\delta > 0$.
\end{proposition}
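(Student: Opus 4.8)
The statement is a version of Bony's paraproduct--commutator estimate, and the plan is to prove it by the usual Littlewood--Paley bookkeeping, relying on two auxiliary facts. The first is the commutator--with--a--projector bound
\[
\| [\Delta_p, a]\, v \|_{L^2} \lesssim 2^{-p} \| \nabla a \|_{L^2} \| v \|_{L^{\infty}},
\]
valid for all $p$ (with $\check{\chi}$ replacing $\check{\rho}$ when $p = -1$): it follows by writing the integral kernel of $[\Delta_p, a]$ as $(x,y) \mapsto 2^{pd} \check{\rho}(2^p(x-y))\,(a(y) - a(x))$, estimating $|a(x) - a(y)| \leqslant |x-y| \int_0^1 |\nabla a(x + t(y-x))|\, dt$, and applying Minkowski's integral inequality. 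The second is the summation principle: if $(w_j)_{j \geqslant -1}$ satisfies $\operatorname{supp} \widehat{w_j} \subset \{ |\xi| \leqslant C 2^j \}$ and $\| w_j \|_{L^2} \leqslant M\, 2^{-j\kappa}$ for some $\kappa > 0$, then $\| \Delta_m \sum_j w_j \|_{L^2} \leqslant \sum_{j \geqslant m - c} \| w_j \|_{L^2} \lesssim M\, 2^{-m\kappa}$, whence $\sum_m 2^{2m(\kappa - \delta)} \| \Delta_m \sum_j w_j \|_{L^2}^2 \lesssim M^2 \sum_{m \geqslant -1} 2^{-2m\delta} < \infty$, so that $\sum_j w_j \in \mathcal{H}^{\kappa - \delta}$ with norm $\lesssim M$ for every $\delta > 0$.

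First I would reorganise the two products so that the individually divergent non-commutator contributions cancel explicitly. Writing $D_i := S_{i-1} f$, so that $f \prec g = \sum_i D_i \Delta_i g$, and setting $\Delta_i g \circ h := \sum_{|p - q| \leqslant 1} \Delta_p(\Delta_i g)\, \Delta_q h$ (so that $\sum_i \Delta_i g \circ h = g \circ h$, using $\sum_i \Delta_p \Delta_i g = \Delta_p g$; and $(D_i\Delta_i g)\circ h$ defined analogously), the resonant products expand as
\[
(f \prec g) \circ h = \sum_i (D_i \Delta_i g) \circ h, \qquad f(g \circ h) = \sum_i D_i(\Delta_i g \circ h) + \sum_i (f - D_i)(\Delta_i g \circ h).
\]
Subtracting and using $\Delta_p(D_i \Delta_i g) - D_i\, \Delta_p(\Delta_i g) = [\Delta_p, D_i]\, \Delta_i g$ gives the identity
\[
C(f, g, h) = \underbrace{\sum_i \sum_{|p-q| \leqslant 1} \big( [\Delta_p, S_{i-1} f]\, \Delta_i g \big)\, \Delta_q h}_{=:\, C_A} \; - \; \underbrace{\sum_i (f - S_{i-1} f)(\Delta_i g \circ h)}_{=:\, C_B}.
\]
Since $D_i \Delta_i g = S_{i-1} f\, \Delta_i g$ has Fourier support in a fixed dyadic annulus $\{ c\, 2^i \leqslant |\xi| \leqslant C\, 2^i \}$, for each $i$ only boundedly many $p$ (hence $q$) contribute to $C_A$, all with $p, q = i + O(1)$, and each $i$-summand of $C_A$ is spectrally supported in a ball of radius $\lesssim 2^i$ (and, as we shall see, so is each $l$-summand of $C_B$).

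For $C_A$ I would apply the commutator bound with $a = S_{i-1} f$, $v = \Delta_i g$, together with $\| \nabla S_{i-1} f \|_{L^2} \leqslant \sum_{l < i} \| \nabla \Delta_l f \|_{L^2} \lesssim \sum_{l < i} 2^{l(1 - \alpha)} \| f \|_{\mathcal{H}^{\alpha}} \lesssim 2^{i(1 - \alpha)} \| f \|_{\mathcal{H}^{\alpha}}$, which is the sole place the hypothesis $\alpha < 1$ enters. Combined with $\| \Delta_i g \|_{L^{\infty}} \lesssim 2^{-i\beta} \| g \|_{\mathcal{C}^{\beta}}$, $\| \Delta_q h \|_{L^{\infty}} \lesssim 2^{-i\gamma} \| h \|_{\mathcal{C}^{\gamma}}$ and $2^{-p} \sim 2^{-i}$, the $i$-summand of $C_A$ has $L^2$-norm $\lesssim 2^{-i(\alpha + \beta + \gamma)} \| f \|_{\mathcal{H}^{\alpha}} \| g \|_{\mathcal{C}^{\beta}} \| h \|_{\mathcal{C}^{\gamma}}$. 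For $C_B$, write $f - S_{i-1} f = \sum_{l \geqslant i - 1} \Delta_l f$ and exchange the order of summation: $C_B = \sum_l \Delta_l f \big( \sum_{i \leqslant l+1} \Delta_i g \circ h \big)$; since $\beta + \gamma < 0$ the inner sum is dominated by its top scale, $\big\| \sum_{i \leqslant l+1} \Delta_i g \circ h \big\|_{L^{\infty}} \lesssim 2^{-l(\beta + \gamma)} \| g \|_{\mathcal{C}^{\beta}} \| h \|_{\mathcal{C}^{\gamma}}$, so the $l$-summand of $C_B$ has $L^2$-norm $\lesssim 2^{-l(\alpha + \beta + \gamma)} \| f \|_{\mathcal{H}^{\alpha}} \| g \|_{\mathcal{C}^{\beta}} \| h \|_{\mathcal{C}^{\gamma}}$. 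Now $\alpha + \beta + \gamma > 0$, so the summation principle applied to each of $C_A$ and $C_B$ yields $\| C(f, g, h) \|_{\mathcal{H}^{\alpha + \beta + \gamma - \delta}} \lesssim \| f \|_{\mathcal{H}^{\alpha}} \| g \|_{\mathcal{C}^{\beta}} \| h \|_{\mathcal{C}^{\gamma}}$ for all $\delta > 0$ and all smooth $f, g, h$; the continuous extension to $\mathcal{H}^{\alpha} \times \mathcal{C}^{\beta} \times \mathcal{C}^{\gamma}$ then follows by density.

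The step I expect to be the main subtlety is the Fourier-support bookkeeping behind the cancellation identity: one must check, with a single dyadic partition of unity fixed throughout, that $D_i \Delta_i g$ and $\Delta_i g$ have spectra in annuli of the stated form (so the inner sums over $p, q$ are finite uniformly in $i$) and that the double sums defining $(f \prec g) \circ h$ and $f(g \circ h)$ reassemble \emph{exactly} into $C_A$ and $C_B$ with no leftover terms --- this is precisely where the cancellation that makes $C(f,g,h)$ meaningful for $\beta + \gamma \leqslant 0$ is used. Everything else is a routine application of Bernstein's inequality (Lemma~\ref{lem:bernstein}), the kernel estimate for $[\Delta_p, a]$, and the summation principle stated at the outset.
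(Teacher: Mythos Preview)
The paper does not give its own proof of this proposition: it is stated in the appendix as a result quoted from Allez--Chouk \cite{allez_continuous_2015}, so there is nothing to compare against. Your argument is the standard proof of this commutator estimate as it appears in the paracontrolled literature (Gubinelli--Imkeller--Perkowski \cite{gubinelli2015paracontrolled}, and indeed Allez--Chouk themselves): split $C$ into a genuine commutator piece $C_A$ handled by the kernel bound $\|[\Delta_p,a]v\|_{L^2}\lesssim 2^{-p}\|\nabla a\|_{L^2}\|v\|_{L^\infty}$, and a tail piece $C_B$ handled by $\beta+\gamma<0$, then sum using $\alpha+\beta+\gamma>0$. The bookkeeping and the estimates are correct.

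One small caveat on your closing sentence: you appeal to density to extend from smooth $f,g,h$ to $\mathcal{H}^\alpha\times\mathcal{C}^\beta\times\mathcal{C}^\gamma$, but smooth functions are \emph{not} dense in $\mathcal{C}^\beta=B^\beta_{\infty,\infty}$. This is not a real obstacle --- every inequality in your argument uses only the size of the Littlewood--Paley blocks $\|\Delta_i g\|_{L^\infty}\lesssim 2^{-i\beta}\|g\|_{\mathcal{C}^\beta}$ and $\|\Delta_q h\|_{L^\infty}\lesssim 2^{-q\gamma}\|h\|_{\mathcal{C}^\gamma}$, never smoothness, so $C_A$ and $C_B$ are already well defined and bounded on the full product space, and one simply \emph{defines} the extension of $C$ to be $C_A-C_B$. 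Alternatively one approximates $g,h$ by $S_N g,S_N h$, which converge in $\mathcal{C}^{\beta-\epsilon},\mathcal{C}^{\gamma-\epsilon}$, and absorbs the $\epsilon$ into the $\delta$ already present in the statement.
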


\begin{lemma}
  \label{lem:fracleib}[Fractional Leibniz,{\cite{gulisashvili1996exact}}] Let
  $1 < p < \infty$ and $p_1, p_2, p'_1, p'_2$ such that
  \[ \frac{1}{p_1} + \frac{1}{p_2} = \frac{1}{p'_1} + \frac{1}{p'_2} =
     \frac{1}{p} . \]
  Then for any $s, \alpha \geqslant 0$ there exists a constant s.t.
  \[ \| \langle \nabla \rangle^s (f g) \|_{L^p} \leqslant C \| \langle \nabla
     \rangle^{s + \alpha} f \|_{L^{p_2}} \| \langle \nabla \rangle^{- \alpha}
     g \|_{L^{p_1}} + C \| \langle \nabla \rangle^{- \alpha} f \|_{L^{p'_2}}
     \| \langle \nabla \rangle^{s + \alpha} g \|_{L^{p'_1}} . \]
\end{lemma}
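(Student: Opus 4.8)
The plan is to treat this as a known result, quoted from \cite{gulisashvili1996exact}, but for completeness let me indicate how one would prove it directly with the Littlewood--Paley/Bony machinery recalled above rather than via commutators. Write the Bony decomposition $fg = f\prec g + f\circ g + f\succ g$. The paraproduct $f\succ g$, in which the frequencies of $f$ dominate, will produce the first term on the right-hand side (the high derivative $\langle\nabla\rangle^{s+\alpha}$ on $f$, the negative power $\nabla^{-\alpha}$ on $g$); the paraproduct $f\prec g$ will produce the second term in the same way with the roles reversed; and the resonant term $f\circ g$, where the output frequency is bounded by each input frequency, can be estimated by throwing the derivatives onto either factor, so it contributes to both right-hand terms.

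First I would invoke the Littlewood--Paley square-function characterisation $\|h\|_{L^p}\sim\big\|\big(\sum_j|\Delta_j h|^2\big)^{1/2}\big\|_{L^p}$, valid for $1<p<\infty$, which reduces the inequality to a frequency-localised estimate. For $f\succ g$ one has, for a fixed integer $N_0$, $\Delta_k(f\succ g)=\sum_{|j-k|\le N_0}\Delta_j f\,S_{j-1}g$, so that $\|\langle\nabla\rangle^s(f\succ g)\|_{L^p}$ is comparable, after harmless index shifts, to $\big\|\big(\sum_k 2^{2ks}\,|\Delta_k f|^2\,|\widetilde{S}_k g|^2\big)^{1/2}\big\|_{L^p}$, where $\widetilde{S}_k$ is a fattened low-frequency cut-off. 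I would then split the weight as $2^{ks}=2^{k(s+\alpha)}\cdot 2^{-k\alpha}$ and, using Bernstein's inequality (Lemma \ref{lem:bernstein}), a standard low-frequency multiplier bound, and the hypothesis $\alpha\ge0$ (to sum the geometric series $\sum_{i\le k}2^{(i-k)\alpha}$), estimate $2^{-k\alpha}|\widetilde{S}_k g|$ pointwise by $\mathcal M\big(\nabla^{-\alpha}g\big)$, the Hardy--Littlewood maximal function. Hölder's inequality in $x$ with exponents $p_1,p_2$, the $L^{p_1}$-boundedness of $\mathcal M$ (note $p_1,p_2\in(1,\infty)$ since $\frac1{p_1}+\frac1{p_2}=\frac1p<1$), and one more application of the square-function equivalence then yield $\|\langle\nabla\rangle^{s+\alpha}f\|_{L^{p_2}}\|\nabla^{-\alpha}g\|_{L^{p_1}}$. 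The term $f\prec g$ is handled identically with $f,g$ and $(p_2,p_1)$ replaced by $g,f$ and $(p_2',p_1')$, giving the second term, and the resonant term $f\circ g$ is handled by the same scheme, estimating $2^{ks}\lesssim 2^{j(s+\alpha)}2^{-j\alpha}$ in terms of the common input frequency $2^j$ and distributing the two factors in either order.

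The step I expect to be the main obstacle — and the reason the result is only cited here — is that, unlike essentially everything else in the paper, this argument genuinely needs the $L^p$-theory for $p\ne2$: the square-function equivalence and the vector-valued Fefferman--Stein maximal inequality. A second, more bookkeeping delicacy is the treatment of the lowest-frequency blocks, where $2^{-j\alpha}\sim1$, and one must track carefully whether the correct operator is the inhomogeneous $\langle\nabla\rangle^{\pm\alpha}$ or the homogeneous $\nabla^{\pm\alpha}$ — this is exactly what forces the slightly asymmetric placement of $\langle\,\cdot\,\rangle$ versus $\nabla$ in the two terms of the statement. For our applications the stated inequality is all we need, and we simply quote it from \cite{gulisashvili1996exact}.
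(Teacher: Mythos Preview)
The paper does not prove this lemma at all: it is stated with a citation to \cite{gulisashvili1996exact} and used as a black box. Your proposal correctly recognises this and then goes further by sketching the standard Bony-paraproduct proof, which is sound in outline (split $fg$ into $f\prec g$, $f\circ g$, $f\succ g$; use the square-function characterisation and Fefferman--Stein to handle the $L^p$ theory; distribute the derivative weight $2^{ks}=2^{k(s+\alpha)}2^{-k\alpha}$ across the two factors). Since the paper's ``proof'' is simply the citation, there is nothing to compare strategies against; your sketch is a genuine addition rather than an alternative route.
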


\begin{lemma}
  \label{lem:tame}[Tame estimate,Corollary 2.86 in
  {\cite{bahouri2011fourier}}] For any $s > 0$ and $(p, r) \in [1, \infty]^2,$
  the space $B^s_{p, r} \cap L^{\infty}$ is an algebra and the bound
  \[ \| u \cdummy v \|_{B_{p, q}^s} \lesssim \| u \|_{B_{p, q}^s} \| v
     \|_{L^{\infty}} + \| u \|_{L^{\infty}} \| v \|_{B_{p, q}^s} \]
  holds.
\end{lemma}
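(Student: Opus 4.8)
The plan is to run the standard paraproduct argument, decomposing $uv = u \prec v + u \circ v + u \succ v$ as in the appendix and estimating each of the three pieces against the Littlewood--Paley projections $\Delta_k$. For the low--high paraproduct $u \prec v = \sum_{j} S_{j-1}u\,\Delta_j v$, the first step is to record the spectral localisation: since $S_{j-1}u$ has Fourier support in a ball of radius $\sim 2^{j}$ and $\Delta_j v$ in an annulus of radius $\sim 2^{j}$, the product $S_{j-1}u\,\Delta_j v$ has Fourier support in a fixed dilated annulus $2^{j}\mathcal{A}'$, so $\Delta_k(S_{j-1}u\,\Delta_j v)$ vanishes unless $|j-k|\le N_0$ for a universal $N_0$. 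Then $\|\Delta_k(u\prec v)\|_{L^p}\lesssim\sum_{|j-k|\le N_0}\|S_{j-1}u\|_{L^{\infty}}\|\Delta_j v\|_{L^p}\lesssim\|u\|_{L^{\infty}}\sum_{|j-k|\le N_0}\|\Delta_j v\|_{L^p}$, using $\|S_{j-1}u\|_{L^{\infty}}\lesssim\|u\|_{L^{\infty}}$; multiplying by $2^{sk}$ and taking the $\ell^q$ norm in $k$ gives $\|u\prec v\|_{B^s_{p,q}}\lesssim\|u\|_{L^{\infty}}\|v\|_{B^s_{p,q}}$, and no sign restriction on $s$ is needed for this term. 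By symmetry $\|u\succ v\|_{B^s_{p,q}}\lesssim\|v\|_{L^{\infty}}\|u\|_{B^s_{p,q}}$.

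The resonant term $u\circ v=\sum_j\sum_{|i-j|\le 1}\Delta_i u\,\Delta_j v$ is the place where the hypothesis $s>0$ enters. Each summand $\Delta_i u\,\Delta_j v$ with $|i-j|\le 1$ has Fourier support in a ball of radius $\sim 2^{j}$, so $\Delta_k$ of it vanishes unless $j\ge k-N_1$ for a universal $N_1$. Hence $2^{sk}\|\Delta_k(u\circ v)\|_{L^p}\lesssim\|u\|_{L^{\infty}}\sum_{j\ge k-N_1}2^{s(k-j)}\,2^{sj}\|\Delta_j v\|_{L^p}$, which is a discrete convolution of the sequence $\big(2^{sj}\|\Delta_j v\|_{L^p}\big)_j\in\ell^q$ with the sequence that equals $2^{sm}$ for $m\le N_1$ and vanishes otherwise; the latter lies in $\ell^1$ precisely because $s>0$, so Young's inequality for sequences yields $\|u\circ v\|_{B^s_{p,q}}\lesssim\|u\|_{L^{\infty}}\|v\|_{B^s_{p,q}}$ (and, symmetrically, also $\lesssim\|v\|_{L^{\infty}}\|u\|_{B^s_{p,q}}$).

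Adding the three estimates gives $\|uv\|_{B^s_{p,q}}\lesssim\|u\|_{L^{\infty}}\|v\|_{B^s_{p,q}}+\|v\|_{L^{\infty}}\|u\|_{B^s_{p,q}}$, which is the asserted bound; combining it with $\|uv\|_{L^{\infty}}\le\|u\|_{L^{\infty}}\|v\|_{L^{\infty}}$ shows that $B^s_{p,q}\cap L^{\infty}$ is an algebra. The only slightly delicate points are the bookkeeping of the Fourier-support (ball versus annulus) geometry that localises $\Delta_k$ of each bilinear piece, and the observation that boundedness of the resonant contribution --- and hence of the whole estimate --- hinges on convergence of the geometric series $\sum_{m\le N_1}2^{sm}$, i.e. on $s>0$. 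Everything else reduces to Hölder's inequality together with the elementary uniform bounds $\|\Delta_j w\|_{L^{\infty}},\|S_{j-1}w\|_{L^{\infty}}\lesssim\|w\|_{L^{\infty}}$.
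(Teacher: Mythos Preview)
Your argument is correct and is precisely the classical Bony paraproduct proof of the tame estimate: localise $u\prec v$ and $u\succ v$ via annular Fourier support to get the $L^{\infty}\times B^{s}_{p,q}$ bounds without any restriction on $s$, and handle the resonant piece via the ball-type support and a geometric series that converges exactly when $s>0$. The paper itself does not give a proof of this lemma at all --- it merely quotes it as Corollary~2.86 of Bahouri--Chemin--Danchin --- and your write-up is essentially the proof one finds there, so there is nothing to compare beyond noting that you have supplied what the paper leaves to the reference.
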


\section{Almost adjointness lemmas for paraproducts}

\begin{lemma}\label{lem:adj} [cf appendix of {\cite{GUZ}}]
  We have
  \[ D (f, g, h) \assign (f, g \circ h) - (f \prec g, h) \]
  defined on smooth functions, extends to a bounded map
  \[ B_{p_1 \infty}^{\alpha} \times B_{p_2 2}^{\beta} \times B_{p_3
     2}^{\gamma} \rightarrow \mathbb{R} \]
  for $1 = \frac{1}{p_1} + \frac{1}{p_2} + \frac{1}{p_3}$ and $\alpha + \beta
  + \gamma = 0$ and $\beta + \gamma < 0$.
\end{lemma}

\begin{proof}
  We use the orthogonality and self-adjointness of Littlewood-Paley blocks to
  get
  \begin{align*}
    D (f, g, h) = & \underset{j \sim k}{\underset{i}{\sum}} (\Delta_i f,
    \Delta_j g \Delta_k h) - \underset{k}{\underset{i \lesssim j}{\sum}}
    (\Delta_i f \Delta_j g, \Delta_k h)\\
    = & \underset{j \sim k}{\underset{i}{\sum}} (\Delta_i f \Delta_j g,
    \Delta_k h) - \underset{k}{\underset{i \lesssim j}{\sum}} (\Delta_i f
    \Delta_j g, \Delta_k h)\\
    = & \underset{j \sim k}{\underset{i}{\sum}} (\Delta_i f \Delta_j g,
    \Delta_k h) - \underset{k \sim j}{\underset{i \lesssim j}{\sum}} (\Delta_i
    f \Delta_j g, \Delta_k h)\\
    = & \underset{i \gtrsim j \sim k}{\sum} (\Delta_i f \Delta_j g, \Delta_k
    h)\\
    \leqslant & \underset{i \gtrsim j \sim k}{\sum} \| \Delta_i f \|_{L^{p_1}}
    \| \Delta_j g \|_{L^{p_2}} \| \Delta_k h \|_{L^{p_3}}
  \end{align*}
  by H{\"o}lder's inequality. Now using $\alpha + \beta + \gamma = 0$ and
  $\alpha > 0 \text{ and } \beta + \gamma < 0$ we have
  \begin{align*}
    D (f, g, h) \leqslant & \underset{i \gtrsim j \sim k}{\sum} 2^{i (\alpha +
    \beta + \gamma)} \| \Delta_i f \|_{L^{p_1}} \| \Delta_j g \|_{L^{p_2}} \|
    \Delta_k h \|_{L^{p_3}}\\
    \lesssim & \left\| 2^{i \alpha} \| \Delta_i f \|_{L^{p_1}}
    \right\|_{l_i^{\infty}} \underset{i \gtrsim j \sim k}{\sum} 2^{i (\beta +
    \gamma)} \| \Delta_j g \|_{L^{p_2}} \| \Delta_k h \|_{L^{p_3}}\\
    \lesssim & \| f \|_{{{B^{\alpha}_{p_1 \infty}} } } \underset{j \sim
    k}{\sum} 2^{j (\beta + \gamma)} \| \Delta_j g \|_{L^{p_2}} \| \Delta_k h
    \|_{L^{p_3}}\\
    \lesssim & \| f \|_{{{B^{\alpha}_{p_1 \infty}} } } \underset{j \sim
    k}{\sum} 2^{j \beta} \| \Delta_j g \|_{L^{p_2}} 2^{k \gamma} \| \Delta_k h
    \|_{L^{p_3}}\\
    \lesssim & \| f \|_{{{B^{\alpha}_{p_1 \infty}} } } \| g
    \|_{{{B^{\beta}_{p_2 2}} } } \| h \|_{{{B^{\gamma}_{p_3 2}} } }
  \end{align*}
  having bounded $\underset{i \gtrsim j}{\sum} 2^{- i \sigma} \sim 2^{- j
  \sigma}  \text{for } \sigma > 0$ to remove the sum in $i.$
  
  This concludes the proof.
\end{proof}

Next we have a similar bound in spirit involving iterated para- and resonant
products. We define the \textit{triple resonant product}
\begin{equation}
  \Pi (g, z, h) \assign \underset{j \sim k \sim l}{\sum} \Delta_j g \Delta_k z
  \Delta_l h.
\end{equation}
\begin{lemma}
  The map
  \[ D^{(2)} (f, g, h, z) \assign (f, \Pi (g, z, h)) - ((f \prec g) \prec z,
     h) \]
  defined for smooth functions, extends to a bounded map
  \[ B_{p_1 \infty}^{\alpha} \times B_{p_2 \infty}^{\beta} \times B_{p_3
     2}^{\gamma} \times B_{p_4 2}^{\delta} \rightarrow \mathbb{R} \]
  for $1 = \frac{1}{p_1} + \frac{1}{p_2} + \frac{1}{p_3} + \frac{1}{p_4}$ and
  $\alpha + \gamma + \delta \geqslant 0, \beta + \gamma + \delta > 0$ and
  $\gamma + \delta < 0$
\end{lemma}

\begin{proof}
  We have (using the product property $\mathrm{supp} \widehat{\Delta_i \cdummy
  \Delta_j} \subset \underset{k \sim i + j}{\cup} \mathrm{supp}
  \widehat{\Delta_k}$ and the orthogonality $\Delta_j \Delta_k = 0$ for $| j -
  k | > 1 $ )
  \begin{eqnarray*}
    D^{(2)} (f, g, h, z) & = & \underset{j \sim k \sim l}{\underset{i}{\sum}}
    (\Delta_i f, \Delta_j g \Delta_k z \Delta_l h) - \underset{l}{\underset{j
    \lesssim k}{\underset{i \lesssim j}{\sum}}} ((\Delta_i f \Delta_j g)
    \Delta_k z, \Delta_l h)\\
    & = & \underset{j \sim k \sim l}{\underset{i}{\sum}} (\Delta_i f,
    \Delta_j g \Delta_k z \Delta_l h) - \underset{}{\underset{l \sim
    k}{\underset{j \sim k}{\underset{i \lesssim j}{\sum}}}} (\Delta_i f
    \Delta_j g, \Delta_k z \Delta_l h)\\
    & = & \underset{j \sim k \sim l}{\underset{i}{\sum}} (\Delta_i f,
    \Delta_j g \Delta_k z \Delta_l h) - \underset{}{\underset{k \sim
    l}{\underset{j \sim k}{\underset{i \lesssim j}{\sum}}}} (\Delta_i f,
    \Delta_j g \Delta_k z \Delta_l h)\\
    & = & \underset{j \sim k \sim l}{\underset{i \gtrsim j}{\sum}} (\Delta_i
    f, \Delta_j g \Delta_k z \Delta_l h)\\
    & \leqslant & \underset{i \gtrsim j \sim k \sim l}{\sum} \| \Delta_i f
    \|_{L^{p_1}} \| \Delta_j g \|_{L^{p_2}} \| \Delta_k z \|_{L^{p_3}} \|
    \Delta_l h \|_{L^{p_4}}
  \end{eqnarray*}
  using H{\"o}lder's inequality in the final step. Then we bound, using
  $\alpha + \gamma + \delta \geqslant 0, \beta \geqslant 0$ and $\gamma +
  \delta < 0$
  \begin{eqnarray*}
    D^{(2)} (f, g, h, z) & \leqslant & \underset{i \gtrsim j \sim k \sim
    l}{\sum} 2^{i (\alpha + \gamma + \delta)} \| \Delta_i f \|_{L^{p_1}} 2^{j
    \beta} \| \Delta_j g \|_{L^{p_2}} \| \Delta_k z \|_{L^{p_3}} \| \Delta_l h
    \|_{L^{p_4}}\\
    & \lesssim & \left\| 2^{i \alpha} \| \Delta_i f \|_{L^{p_1}}
    \right\|_{l_i^{\infty}} \underset{i \gtrsim j \sim k \sim l}{\sum} 2^{i
    (\gamma + \delta)} 2^{j \beta} \| \Delta_j g \|_{L^{p_2}} \| \Delta_k z
    \|_{L^{p_3}} \| \Delta_l h \|_{L^{p_4}}\\
    & \lesssim & \| f \|_{{{B^{\alpha}_{p_1 \infty}} } } \underset{j \sim k
    \sim l}{\sum} 2^{j (\beta + \gamma + \delta)} \| \Delta_j g \|_{L^{p_2}}
    \| \Delta_k z \|_{L^{p_3}} \| \Delta_l h \|_{L^{p_4}}\\
    & \lesssim & \| f \|_{{{B^{\alpha}_{p_1 \infty}} } } \underset{j \sim k
    \sim l}{\sum} 2^{j \beta} \| \Delta_j g \|_{L^{p_2}} 2^{k \gamma} \|
    \Delta_k z \|_{L^{p_3}} 2^{l \delta} \| \Delta_l h \|_{L^{p_4}}\\
    & \lesssim & \| f \|_{{{B^{\alpha}_{p_1 \infty}} } } \| g
    \|_{{{B^{\beta}_{p_2 \infty}} } } \| z \|_{{{B^{\gamma}_{p_3 2}} } } \| h
    \|_{{{B^{\delta}_{p_4 2}} } }
  \end{eqnarray*}
  having used the geometric series $\underset{i \gtrsim j}{\sum} 2^{- i
  \sigma} \sim 2^{- j \sigma} \text{ for } \sigma > 0$ to get rid of the sum
  in $i$ and Hölder's inequality for the other sums.
\end{proof}

This result is different from the previous one, since the first lemma has a
resonant product that is ill-defined by itself but by duality can be defined.
In this second result, it is rather the converse, that an ill-defined dual
pairing can be defined by using that one term has a paraproduct structure and
then defining it by duality.

\begin{corollary}
  \label{cor:adj2}We may bound
  \begin{equation}
    | ((f \prec g) \prec z, h) | \lesssim \| f \|_{\mathcal{C}^{\alpha}} \| g
    \|_{\mathcal{C}^{\beta}} \| z \|_{\mathcal{H}^{\gamma} } \| h
    \|_{\mathcal{H}^{\delta}}
  \end{equation}
  for $\alpha + \gamma + \delta > 0, \beta + \gamma + \delta > 0$ and $\gamma
  + \delta < 0$. 
\end{corollary}

\end{document}